\theoremstyle{plain}
\newtheorem {theorem}{Theorem}
\newtheorem {lemma}{Lemma}
\newtheorem {corollary}{Corollary}
\newtheorem*{conjecture}{Conjecture}
\newtheorem {proposition}{Proposition}
\theoremstyle{definition}
\newtheorem {definition}{Definition}
\newtheorem {remark}{Remark}
\let\@newpf\proof \let\proof\relax 
\newenvironment{proof}{\@newpf[\proofname]}{\qed\endtrivlist}
\DeclareMathOperator{\ch}{ch}
\DeclareMathOperator{\Res}{Res}
\DeclareMathOperator{\Ind}{Ind}
\DeclareMathOperator{\Hom}{Hom}
\DeclareMathOperator{\PF}{PF}
\def\hm#1{#1\nobreak\discretionary{}{\hbox{\m@th$#1$}}{}}
\DeclareMathAlphabet{\mathbbold}{U}{bbold}{m}{n}
\DeclareSymbolFont{rsfscript}{OMS}{rsfs}{m}{n}
\DeclareSymbolFontAlphabet{\mathrsfs}{rsfscript}
\DeclareFontFamily{OMS}{rsfs}{\skewchar\font'177}
\DeclareFontShape{OMS}{rsfs}{m}{n}{%
      <5> rsfs5
      <6> <7> rsfs7
      <8> <9> <10> rsfs10
      <10.95> <12> <14.4> <17.28> <20.74> <24.88> rsfs10
      }{}
\def\calF{\mathrsfs{F}}
\def\calL{\mathrsfs{L}}
\def\frA{\mathfrak{A}}
\def\frB{\mathfrak{B}}
\def\frC{\mathfrak{C}}
\def\frH{\mathfrak{H}}
\def\frM{\mathfrak{M}}
\def\frN{\mathfrak{N}}
\def\frP{\mathfrak{P}}
\def\frQ{\mathfrak{Q}}
\def\frU{\mathfrak{U}}
\def\frV{\mathfrak{V}}
\begin{document}
%
%
%
%
%
%
%
%

\title{Parking functions and vertex operators}
\author{Vladimir Dotsenko}
\address{%
Dublin Institute for Advanced Studies,\\
10 Burlington Road,\\
Dublin 4,\\
Ireland}
\email{vdots@maths.tcd.ie}

\thanks{The author was partially supported of the CNRS--RFBR grant no.~07-01-92214, by the President of the Russia grant~NSh-3472.2008.2, and by an IRCSET research fellowship.}

\begin{abstract}
We introduce several associative algebras and families of vector spaces associated to these algebras. Using lattice vertex operators, we obtain dimension and character formulae for these spaces. In particular, we define a family of representations of symmetric groups which turn out to be isomorphic to parking function modules. We also construct families of vector spaces whose dimensions are Catalan numbers and Fuss--Catalan numbers respectively. Conjecturally, these spaces are related to spaces of global sections of  vector bundles on (zero fibres of) Hilbert schemes and representations of rational Cherednik algebras.
\end{abstract}

\maketitle

\section{Introduction}

\subsection{Description of results} In his celebrated works~\cite{Ha1,Ha2} on the ``$n!$ conjecture'', Mark Haiman introduced several collections of bi-graded representations of symmetric groups whose dimensions are equal to numbers of parking functions, Catalan numbers, higher Catalan numbers etc. These spaces appear as global sections of certain sheaves on the zero fibre of the Hilbert scheme of points in the plane. Since then, there were several results that shed light on the representation--theoretical explanation of Haiman's result. In particular, Iain Gordon and Toby Stafford in~\cite{GS} related Haiman's spaces to finite-dimensional representations of rational Cherednik algebras (for special values of parameters). In this paper, we study some noncommutative algebras and associate to them some families of finite-dimensional vector spaces of the same dimensions as Haiman's spaces. We expect that these spaces are related to those of Haiman and Gordon--Stafford. 

Denote by~$\frB(k)$ the associative algebra with generators $e_p[i]$ ($p=1,\ldots,k$, $i\in\mathbb{Z}$) and relations 
 $$
\{e_p[i],e_q[j]\}=\{e_p[i+1],e_q[j-1]\} \text{ for all } i,j, p,q.
 $$ 
In other words, for every fixed distinct $p,q$ the anti-commutator 
 $$
\{e_p[i],e_q[j]\}:=e_p[i]e_q[j]+e_q[j]e_p[i]
 $$ 
depends only on the sum~$i+j$. We shall refer to subscripts as ``colours'': the algebra $\frB(k)$ has generators of $k$ different colours.

Consider the right $\frB(k)$-module $\frN$ induced from the trivial representation of the subalgebra $\frB_+(k)$ generated by $e_p[i]$ for all $p=1,\ldots,k$ and all $i>0$: $\frN=\mathbb{C}v\otimes_{\frB_+}\frB$, where $ve_p[i]=0$ for $e_p[i]\in\frB_+$. Let~$\frB_{\ge0}(k)\hm=\mathbb{C}\{ e_p[i]\mid i\ge0\}$. Denote by~$\frQ_{n}(k)$ be the image of the mapping from the $k^\text{th}$ tensor power $\frB_{\ge0}(k)^{\otimes n}$ to~$\frN$ that maps 
each tensor product 
 $$
e_{p_1}[i_1]\otimes e_{p_2}[i_2]\otimes\ldots\otimes e_{p_n}[i_n]
 $$ 
to 
 $$
ve_{p_1}[i_1]e_{p_2}[i_2]\ldots e_{p_n}[i_n].
 $$ 
In addition, consider the subspace $\frQ_{n}^{multi}(n)$ of $\frQ_{n}(n)$ spanned by all monomials containing generators of all colours (so that each colour occurs exactly once). 

\begin{theorem}\label{ParkingCor1}
\begin{enumerate}
 \item The dimension of the subspace $\frQ_{n}^{multi}(n)$ is equal to the number of parking functions of length~$n$, that is~$(n+1)^{n-1}$.
 \item Moreover, the subspace $\frQ_{n}^{multi}(n)$ equipped with the natural action of the symmetric group~$S_n$ (which permutes the colours) is isomorphic to the parking functions module~$\PF(n)$.
\end{enumerate}
\end{theorem}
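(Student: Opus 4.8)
The plan is to obtain both parts at once from an explicit lattice vertex operator realisation of the module $\frN$. The first step is to repackage the defining relations: writing $e_p(z)=\sum_i e_p[i]\,z^{-i}$, the requirement that $\{e_p[i],e_q[j]\}$ depends only on $i+j$ is precisely the statement that $\{e_p(z),e_q(w)\}$ is supported on the diagonal $z=w$, i.e. that the $e_p(z)$ are mutually local odd fields whose operator product expansions have only simple poles. Such a family of fields is realised by vertex operators $\Gamma_{\gamma_p}(z)$ attached to a spanning set $\gamma_1,\ldots,\gamma_k$ of a suitable integral lattice $L_k$ (possibly with a degenerate or indefinite form) once a $2$-cocycle is fixed, and then $\frN$ sits inside the Fock space $V_{L_k}$ of the corresponding lattice vertex algebra as the submodule generated from the vacuum $v$. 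The crucial point here is to show that this realisation is \emph{faithful enough}, i.e. that the natural map is an isomorphism onto its image; one checks this by comparing the bigraded dimensions coming from a normal-ordering (PBW-type) analysis of $\frB(k)$ with those read off directly from the lattice Fock space.

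Granting the model, $\frQ_n(k)$ is the span inside $V_{L_k}$ of the vectors $\Gamma_{\gamma_{p_1}}[i_1]\cdots\Gamma_{\gamma_{p_n}}[i_n]\,v$ with all $i_j\ge 0$; since each mode $\Gamma_{\gamma_p}[i]$ shifts the charge by $\gamma_p$, this span lies in a single charge sector depending only on the colour content, and for the multicoloured space ($k=n$, each colour once) the sector is the fixed one attached to $\gamma_1+\cdots+\gamma_n$. I would then compute the character of $\frQ_n(n)$ as a representation of $\mathrm{GL}_n$ acting on the colours (this acts on $\frB(k)$ by algebra automorphisms, the relations being bilinear and holding for all colours at once), graded by the total level $\sum_j i_j$; using the explicit mode formulas for lattice vertex operators together with the constraint $i_j\ge 0$, the relevant generating function collapses — after normal ordering and taking residues — to a finite expression. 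The multilinear part of $\frQ_n(n)$, that is the $\mathrm{GL}_n$-weight space of weight $(1,1,\ldots,1)$, is exactly $\frQ_n^{multi}(n)$ with its $S_n$-action, and passing from the $\mathrm{GL}_n$-character of $\frQ_n(n)$ to that weight space turns the answer into the symmetric function $\sum_{\lambda\vdash n}\frac{(n+1)^{\ell(\lambda)-1}}{z_\lambda}\,p_\lambda$. This is precisely the Frobenius characteristic of the parking function module $\PF(n)$ — a permutation with $\ell$ cycles fixes $(n+1)^{\ell-1}$ parking functions of length $n$ — and its total dimension is $(n+1)^{n-1}$, which gives both assertions.

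It is worth noting that the inequality $\dim\frQ_n^{multi}(n)\le(n+1)^{n-1}$ can also be obtained without vertex operators, by a reduction argument inside $\frN$: applying $\{e_p[i],e_q[j]\}=\{e_p[i+1],e_q[j-1]\}$ to a monomial that already uses all $n$ colours (so that every correction term stays multicoloured), together with $v\,e_p[i]=0$ for $i>0$, one rewrites every $v\,e_{\sigma(1)}[i_1]\cdots e_{\sigma(n)}[i_n]$ to a canonical form, and the canonical forms are naturally indexed by parking functions of length $n$. The vertex operator computation of the previous paragraph is what then shows this spanning set is actually a basis, i.e. that no further collapse takes place.

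The main obstacle is exactly this tightness: proving that $\frB(k)$, and hence $\frN$, is no smaller than the lattice Fock space predicts — equivalently, a confluence (Diamond-Lemma-type) statement for the rewriting system defined by the relations, matched against the parking-function normal forms. The lattice vertex operators are precisely the device supplying the otherwise-missing lower bound for the dimension. A secondary subtlety that should not be skipped is the role of the $2$-cocycle: the permutation action on colours lifts to the Fock space only up to the signs dictated by the cocycle, so one must verify that these signs are coherent (or can be absorbed into a rescaling of the standard basis), and hence that the $S_n$-module obtained is $\PF(n)$ itself and not $\PF(n)$ twisted by the sign character.
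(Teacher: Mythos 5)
Your overall architecture coincides with the paper's: an upper bound on $\dim\frQ_{n}^{multi}(n)$ obtained by rewriting monomials in $\frN$ to normal forms indexed by parking functions, and a lower bound supplied by realizing $\frB(n)$ on a sum of Fock spaces for the lattice $\bigoplus_p\mathbb{Z}f_p$ with $(f_p,f_q)=-1$. The problem is that the step on which everything hinges --- the lower bound --- is asserted rather than proved. You claim that the $\mathrm{GL}_n$-character of $\frQ_n(n)$ ``collapses, after normal ordering and taking residues,'' to $\sum_{\lambda\vdash n}z_\lambda^{-1}(n+1)^{\ell(\lambda)-1}p_\lambda$. That is indeed $\mathcal{F}_{\PF(n)}$, but nothing in the proposal indicates how such a computation would go, and it is not a routine residue calculation: the span of the vectors $v\,\Gamma_{\gamma_{p_1}}[i_1]\cdots\Gamma_{\gamma_{p_n}}[i_n]$ with all $i_j\ge 0$ is not a charge or weight sector of the Fock space whose character can simply be read off, so there is no closed formula waiting to be extracted. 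The paper closes this gap by a different and much cheaper device: it never computes the character of $\frQ_n(k)$ from the vertex-operator side at all. It exhibits an explicit spanning set of \emph{admissible monomials} for the entire space $\frV=\bigoplus_h\calF_h$, computes the generating function of that set by a partition count (coloured strictly decreasing sequences of negative integers), observes that this equals the manifest character of $\frV$ as a polynomial algebra on the negative Heisenberg modes, and concludes that the admissible monomials are linearly independent. Linear independence of the parking-function normal forms in $\frN$ then follows by pushing them forward along the surjection $\frN\to\frV_0$ (Theorem~\ref{ParkingTh}), and the count $(n+1)^{n-1}$ comes from a purely combinatorial bijection with labelled Dyck paths. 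Your proposal needs some substitute for this character-matching step; the ``faithfulness'' you flag in your first paragraph is exactly the unproved claim, and deferring it to ``a PBW-type analysis of $\frB(k)$'' begs the question, since a PBW basis for $\frB(k)$ is precisely what is not available a priori.

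Two smaller points. For part (2), even granting the dimension count, identifying the $S_n$-module structure requires an argument, and your route (recognize the full Frobenius character) again rests on the unproved collapse. The paper instead uses a triangularity argument: ordering the basis monomials lexicographically by their $\mathbf{i}$-data, it shows that $\sigma$ applied to the monomial of a parking function $f$ equals the monomial of $\sigma(f)$ plus strictly larger terms, so every permutation has the same trace on $\frQ_{n}^{multi}(n)$ as on $\PF(n)$; this is elementary once the basis is in hand and uses no symmetric-function identities. Finally, the $2$-cocycle worry, while sensible for general lattice vertex algebras, is moot here: the paper's construction uses commuting translation operators $D_{h',h}$, and the expansion of $(1-z/w)^{-(f_p,f_q)}=1-z/w$ produces the anticommutator relations of $\frB(k)$ on the nose with no sign choices; moreover the $S_n$-action is defined on the abstract module $\frN$ by permuting colours, not on the Fock space, so no lift of the permutation action (and hence no coherence of signs) ever needs to be checked.
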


The following result relates all vector spaces~$\frQ_n(k)$ (not only the ``multilinear components''~$\frQ_{n}^{multi}(n)$) to parking functions.

\begin{theorem}\label{ParkingCor2}
The vector space $\frQ_n(k)$ has a natural $\mathbb{Z}^k$-grading (that counts colours); consider the generating function $\ch_n(x_1,\ldots,x_k)$ of dimensions of its homogeneous components;
 $$
\ch_n(x_1,\ldots,x_k)=\sum_{i_1,\ldots,i_k}\dim(\frQ_n(k))_{i_1,\ldots,i_k}x_1^{i_1}\cdot\ldots\cdot x_k^{i_k}.
 $$
Then  
 $$
\ch_n(x_1,\ldots,x_k)=\mathcal{F}_{\PF(n)}(x_1,\ldots,x_k),
 $$
where $\mathcal{F}_{\PF(n)}(x_1,\ldots,x_k)$ is the Frobenius character of the parking functions module (more precisely, its projection from symmetric functions to symmetric polynomials in $k$ variables).
\end{theorem}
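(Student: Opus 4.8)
The plan is to realise $\frB(k)$ by lattice vertex operators and extract the character of $\frQ_n(k)$ from the Frenkel--Kac formula. Take the Fock space $\mathcal{V}=\mathbb{C}[a_p(-m)\colon 1\le p\le k,\ m\ge 1]\otimes\mathbb{C}[\mathbb{Z}^k]$, equip $\mathbb{Z}^k=\bigoplus_p\mathbb{Z}\alpha_p$ with a symmetric bilinear form (one may take $\langle\alpha_p,\alpha_q\rangle=-1$ for all $p,q$), and let $e_p[i]$ be the Fourier coefficients of the corresponding (generalised) vertex operators $Y(e^{\alpha_p},z)=\sum_i e_p[i]z^{-i}$. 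Since $\langle\alpha_p,\alpha_q\rangle=-1$, the product $Y(e^{\alpha_p},z)Y(e^{\alpha_q},w)$ has only a simple pole, and a short computation gives $\{Y(e^{\alpha_p},z),Y(e^{\alpha_q},w)\}=z^{-1}\delta(w/z)\cdot c\,Y(e^{\alpha_p+\alpha_q},w)$ for a constant $c$; comparing modes shows $\{e_p[i],e_q[j]\}$ is a fixed multiple of a single mode of $Y(e^{\alpha_p+\alpha_q},z)$, hence depends only on $i+j$, so $\frB(k)$ acts on $\mathcal{V}$. For $v=1\otimes e^0$ the relations $ve_p[i]=0$ ($i>0$) hold automatically because $Y(e^{\alpha_p},z)$ applied to the vacuum involves no negative powers of $z$, so there is a module map $\frN\to\mathcal{V}$ sending $v\mapsto v$; it carries $\frQ_n(k)$ into the span, over all colour words $(p_1,\dots,p_n)$ and all integers $i_1,\dots,i_n\ge 0$, of the coefficients of $z_1^{-i_1}\cdots z_n^{-i_n}$ in $Y(e^{\alpha_{p_1}},z_1)\cdots Y(e^{\alpha_{p_n}},z_n)$ applied to $v$. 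The final identification will show this map is injective on each $\frQ_n(k)$.

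Next, apply the Frenkel--Kac formula
$$
Y(e^{\alpha_{p_1}},z_1)\cdots Y(e^{\alpha_{p_n}},z_n)\,v=\varepsilon(\vec p)\Big(\prod_{1\le a<b\le n}(z_a-z_b)^{-1}\Big)E^-(z_1,\dots,z_n)\,e^{\alpha_{p_1}+\cdots+\alpha_{p_n}}
$$
(expanded in $|z_1|>\cdots>|z_n|$), where $\varepsilon(\vec p)=\pm1$ and $E^-$ is the exponential of the creation operators $a_{p_a}(-m)z_a^m$. Two points are decisive. First, the scalar prefactor $\prod_{a<b}(z_a-z_b)^{-1}$ is independent of the word $\vec p$ (this is why $\langle\alpha_p,\alpha_q\rangle=-1$ is imposed for \emph{all} pairs). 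Second, the coefficients of $E^-(z)\,e^{\alpha_{p_1}+\cdots+\alpha_{p_n}}$ are linearly independent except for the relations forced when colours $p_a$ coincide, in which case $z_a$ and $z_b$ feed the same Heisenberg family. Consequently, for a fixed colour content $(a_1,\dots,a_k)$ the dimension of $(\frQ_n(k))_{a_1,\dots,a_k}$ equals the rank of an explicit matrix formed from the Laurent coefficients of $\prod_{a<b}(z_a-z_b)^{-1}$, with rows and columns organised by the distribution of the $n$ variables into $k$ colour blocks; expanding the prefactor into non-negative geometric series presents this rank as a lattice-point count.

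The combinatorial heart is to identify this count with parking functions. Straightening an arbitrary monomial $ve_{p_1}[i_1]\cdots e_{p_n}[i_n]$ — using $ve_p[i]=0$ for $i>0$ together with the anticommutator relations to sort generators within each colour and to lower indices — yields a spanning set of $\frQ_n(k)$ indexed by: for each colour $p$, a multiset of $a_p$ non-negative integers, subject to the condition that the combined multiset $\{i_1,\dots,i_n\}$, after adding $1$ to each entry, is the content vector of a parking function of length $n$ (equivalently $\#\{a\colon i_a\le i\}\ge i+1$ for all $i\ge 0$). Such data is precisely a non-negative integer matrix with column sums $(a_1,\dots,a_k)$ whose sequence of row sums satisfies the parking inequalities, and the number of them is $\langle\mathcal{F}_{\PF(n)},h_{(a_1,\dots,a_k)}\rangle$, by the classical description $\mathcal{F}_{\PF(n)}=\sum h_{c_1}h_{c_2}\cdots$ summed over compositions $(c_1,c_2,\dots)$ of $n$ with $c_1+\cdots+c_i\ge i$ for all $i$. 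The vertex operator model shows that the spanning set above is in fact a basis, so $\dim(\frQ_n(k))_{a_1,\dots,a_k}=\langle\mathcal{F}_{\PF(n)},h_{(a_1,\dots,a_k)}\rangle$, and summing over all contents via the identity $\mathcal{F}_M=\sum_\mu(\dim M^{S_\mu})\,m_\mu$ (valid for every $S_n$-module $M$, with $S_\mu$ a Young subgroup and $m_\mu$ a monomial symmetric function) gives $\ch_n(x_1,\dots,x_k)=\mathcal{F}_{\PF(n)}(x_1,\dots,x_k)$. Incidentally, the upper bound $\ch_n\le\mathcal{F}_{\PF(n)}$ coefficientwise can be obtained without vertex operators: since the defining relations hold for all colours, each surjection $\phi\colon\{1,\dots,n\}\to\{1,\dots,k\}$ induces an algebra map $\frB(n)\to\frB(k)$, hence a surjection $(\frQ_n^{multi}(n))_{S_{\vec a}}\twoheadrightarrow(\frQ_n(k))_{\vec a}$ onto the component of content $\vec a=(\,|\phi^{-1}(p)|\,)_p$, and Theorem~\ref{ParkingCor1} identifies the source with $(\PF(n))_{S_{\vec a}}$.

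I expect the decisive obstacle to be the lower bound on dimensions — the statement that the straightened spanning set (equivalently, the recolouring surjection above) does not collapse any further. The abstract presentation of $\frB(k)$ alone gives only the upper bound, so the missing linear independence must be supplied by the explicit Frenkel--Kac formula, through the independence of monomials in distinct Heisenberg generators, applied degree by degree in the bosonic grading; for the multilinear content $(1,\dots,1)$ this is exactly Theorem~\ref{ParkingCor1}, but for repeated colours it genuinely relies on the vertex operator realisation. Once both bounds are established, recognising the generating function as $\mathcal{F}_{\PF(n)}$ is routine — either from the content description of parking functions above, or from the closed form $\mathcal{F}_{\PF(n)}=\frac{1}{n+1}h_n[(n+1)X]$, where the factor $n+1$ appears as the $n$ variables $z_a$ together with the vacuum-charge shift in the residue computation.
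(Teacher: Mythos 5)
Your overall architecture coincides with the paper's: realise $\frB(k)$ on the direct sum of Fock spaces for the lattice with $(f_p,f_q)=-1$, straighten arbitrary monomials to a distinguished spanning set, identify that set with $S_\mu$-orbits of parking functions, and recover $\mathcal{F}_{\PF(n)}$ from $\langle\mathcal{F}_V,h_\mu\rangle=\dim V^{S_\mu}$ together with the duality of the bases $\{h_\mu\}$ and $\{m_\mu\}$. Your combinatorial endgame (non-negative integer matrices with prescribed column sums and parking-function row sums, counted by $\langle\mathcal{F}_{\PF(n)},h_{(a_1,\ldots,a_k)}\rangle$) is correct and is essentially the paper's identification of admissible monomials with labelled-Dyck-path orbit data, phrased in terms of content vectors instead of the sequence conditions (i)--(iii) of Theorem~\ref{ParkingTh}.

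The gap is exactly where you predict it: the lower bound. You reduce linear independence of the straightened spanning set to ``the rank of an explicit matrix formed from the Laurent coefficients of $\prod_{a<b}(z_a-z_b)^{-1}$'' and assert that this rank is a lattice-point count, but that rank computation is the entire difficulty and is never performed. The Frenkel--Kac product formula by itself only tells you that the admissible monomials are certain explicit linear combinations of the genuinely independent vectors $\prod_a S_{j_a}(\alpha_{p_a})e^{\gamma}$; it does not tell you that these combinations are independent, and a triangularity or determinant argument for the coefficient matrix would still have to be supplied. The paper closes this gap by a different, global mechanism: it introduces the space $\frM_{\infty/2}$ freely spanned by formal admissible ``semiinfinite'' monomials over all charges $h\in H$, computes its $(q,z)$-character by a coloured-partition count, observes that this equals the character $\frac{1}{(q^{-1})_\infty^k}\sum_{h}q^{-\frac12(h-f_1,h)}z^{h}$ of $\frV=\bigoplus_h\calF_h$, and concludes that the tautological surjection $\frM_{\infty/2}\twoheadrightarrow\frV$ is an isomorphism; independence inside $\frQ_n(k)$ then follows from the universal property of $\frN$, which provides a module map $\frN\to\frV$ sending $v$ to $v_0$ and the candidate basis to admissible monomials. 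Either import that character comparison or carry out your rank computation in full; as written, the sentence ``the vertex operator model shows that the spanning set above is in fact a basis'' is an assertion, not a proof.
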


\subsection{Outline of the paper} The main idea behind the proofs of all our statements is to use some particular lattice vertex algebras. More precisely, the paper is organised as follows.
 
In Section~\ref{technique}, we recall some basic facts on lattice vertex operators for infinite-dimensional Heisenberg algebras. We present a self-contained introduction to lattice vertex operators; we restrict ourselves to the basic results required for our proofs. There are several articles and books one can refer to in order to get a whole variety of viewpoints, for example \cite{Dong,FK,Kac,LW,LX,Ro}. In this paper, we prefer a somewhat different approach and try to follow the philosophy of the paper of Feigin and Fuchs~\cite{FF}, who used vertex operators to construct singular vectors in Verma modules for the Virasoro algebra. The underlying vector space in Feigin--Fuchs approach is the space of semiinfinite forms, and the Heisenberg algebra appears naturally as a consequence of the so called boson--fermion correspondence. Our  Proposition~\ref{gen-span} below should be thought of as some version of the boson--fermion correspondence for operators which are slightly more complicated than the fermions. The main case of interest for our paper is a very degenerate case of a lattice equipped with a bilinear form whose kernel has codimension~$1$.
 
In Section~\ref{parking}, we prove the results announced above. During the preparation of this paper, we learned that the monomial bases which appear as the key ingredient of our proof were introduced previously in a paper of Roitman~\cite{Ro}. However, our proofs are essentially different: instead of using the machinery of rewriting and the Diamond lemma, we use explicit representations or our algebras which help to get lower bounds on dimensions and characters (thus making explicit a remark of Dong quoted in~\cite{Ro}).
 
We conclude with some important particular cases and some generalisations of our results, and some open questions. In particular, we discuss the possible relation between our vector spaces and vector spaces of Haiman and Gordon--Stafford.

All vector spaces and algebras in this paper are defined over the field of complex numbers.

\subsection{Acknowledgements} The author wishes to thank his teacher Boris Feigin who originally conjectured Theorems \ref{ParkingCor1} and \ref{CatalanCor} of this paper. Feigin's seminar where he generously shares his ideas with students and colleagues is probably the only thing that the author misses very much from Moscow. The author is also grateful to Evgeny Sklyanin for his questions on Catalan numbers; his interest in this work was really stimulating. 

\section{Lattice vertex operators: a summary}\label{technique}

\subsection{General statements}

\begin{definition}
Let $H$ be a lattice, that is, a free abelian group equipped with an integer-valued bilinear form $(\cdot,\cdot)$. Denote by $\frH$ the complex vector space~$\mathbb{C}\otimes_{\mathbb{Z}}H$ spanned by our lattice. The lattice Heisenberg Lie algebra corresponding to this data is the vector space     
 $$
\widehat{\frH}=\frH\otimes\mathbb{C}[t,t^{-1}]\oplus\mathbb{C}K
 $$ 
with the Lie bracket
\begin{gather*}
[h_1\otimes p(t),h_2\otimes q(t)]=(h_1,h_2) \Res\limits_{t=0}p(t)dq(t) K,\\
[K,h\otimes p(t)]=0.
\end{gather*}
\end{definition}

Throughout this paper, all modules over all algebras we study are considered to be \emph{right} modules. We also adopt the notation $h[i]:=h\otimes t^i$ for every $h\in\frH$.

\begin{definition}
Let $h\in\frH$. \emph{The Fock space representation $\calF_h$ of the Heisenberg algebra} is, by definition, the representation induced from a 1-dimensional representation $T_h$ of the subalgebra $\frH\otimes\mathbb{C}[t]\oplus\mathbb{C}K\subset\widehat{\frH}$, where $K$ acts by $1$, and $h'\otimes p(t)$ acts by the scalar~$(h,h')p(0)$. The basis element of the distinguished copy of $T_h$ inside the induced representation is called the \emph{vacuum vector} of the Fock space and is denoted by $v_h$.
\end{definition}

\begin{definition}
Let $h',h\in\frH$. A (lattice) vertex operator is a formal expression
 $$
V_{h',h}(z)=\sum\limits_{k\in\mathbb{Z}}V_{h',h}[k]z^{-k}
 $$
where the coefficients $V_{h',h}[i]$ (components of the vertex operator) are linear operators, 
 $$
V_{h',h}[i]\colon\calF_{h'}\to\calF_{h'+h}\ \  (i\in\mathbb{Z})
 $$ 
satisfying the following conditions:
\begin{itemize}
 \item Commutator relations between our operators and the operators of the Heisenberg algebra, namely
 $$
[h''[j], V_{h',h}[i]]=-(h,h'')V_{h',h}[j+i] \text{ for all } i,j;
 $$
these relations are equivalent to the following commutator relations for the vertex operator: 
 $$
[h''\otimes p(t), V_{h',h}(z)]=-(h,h'')p(z)V_{h',h}(z).
 $$
 \item Homogeneity: $V_{h',h}[i]$ is a homogeneous operator of degree $i$ (for natural gradings on the Fock spaces induced from the grading on the Heisenberg algebra for which $\deg(h[j])=j$). 
 \item Normalisation: $$v_{h'}V_{h',h}[0]=v_{h'+h}.$$
\end{itemize}
\end{definition}

\begin{proposition}\label{exist}
Let $D_{h',h}\colon \calF_{h'}\to \calF_{h'+h}$ be the operator uniquely determined from the following conditions:
  \begin{itemize}
   \item[(i)] $v_{h'}D_{h',h}=v_{h'+h}$;
   \item[(ii)] $[h''[j],D_{h',h}]=0$ for $j\ne0$.
  \end{itemize}
Define the formal generating function $B^{h',h}(z)$ by the following formula:
 $$
B^{h',h}(z)=\exp\left(\sum_{i<0} h[-i]\frac{z^{i}}{i}\right)
D_{h',h}
\exp\left(\sum_{i>0}h[-i]\frac{z^i}{i}\right).
 $$
(Coefficients of $z^j$ in this generating function are well defined, since $\calF_{h'}$ has no elements of positive degree.) Then components of this generating function give a vertex operator.
\end{proposition}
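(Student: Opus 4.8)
The plan is to verify the three defining axioms of a vertex operator directly for the operator $B^{h',h}(z)$ defined by the given normal-ordered product, working entirely inside the formal generating function. First I would record the basic commutation relations that make the computation tractable: since the $h[i]$ satisfy $[h[i],h[j]]=(h,h)\,i\,\delta_{i+j,0}K$ and $K$ acts as $1$, the two exponential factors $E_-(z)=\exp(\sum_{i<0}h[-i]z^i/i)$ and $E_+(z)=\exp(\sum_{i>0}h[-i]z^i/i)$ are genuine operators on (respectively, completions of) the Fock spaces, and their coefficients act locally nilpotently in the relevant direction so that all products below are well defined coefficient-by-coefficient in $z$. The operator $D_{h',h}$ commutes with every $h''[j]$ for $j\ne 0$ by hypothesis~(ii), and since $\frH$ is abelian it is only through the central term that the nontrivial commutators arise.

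Next I would check the \emph{commutator relation} $[h''\otimes p(t),B^{h',h}(z)]=-(h,h'')p(z)B^{h',h}(z)$. It suffices to check this for $p(t)=t^j$, i.e. for the generators $h''[j]$. For $j=0$, $h''[0]$ acts on $\calF_{h'}$ by the scalar $(h',h'')$ and on $\calF_{h'+h}$ by $(h'+h,h'')$, and it commutes with $E_\pm(z)$ and with $D_{h',h}$ up to the scalar discrepancy, giving exactly $-(h,h'')B^{h',h}(z)$ — note $p(z)=z^0=1$ here. For $j\ne 0$: $h''[j]$ commutes with $D_{h',h}$, so I only need the commutators with the exponentials. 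Using $[h''[j],\sum_{i<0}h[-i]z^i/i]=(h,h'')\,j\,z^{-j}/(-j)\cdot(\text{the }i=-j\text{ term, present iff }j>0)$ — more precisely $[h''[j],h[-i]]=(h,h'')j\,\delta_{-i+j,0}$ — one finds that $h''[j]$ commutes through $E_-(z)$ picking up $-(h,h'')z^{-j}$ when $j>0$ and through $E_+(z)$ picking up $-(h,h'')z^{-j}$ when $j<0$; in either case the total is $-(h,h'')z^{-j}B^{h',h}(z)$, which is $-(h,h'')p(z)B^{h',h}(z)$ for $p(t)=t^j$. This is the computational heart of the argument and the step I expect to require the most care: keeping the signs and the ranges of summation straight, and confirming that the central term contributes a scalar (not an operator) precisely because $E_-$ absorbs only positive-$j$ modes and $E_+$ only negative-$j$ modes.

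The \emph{homogeneity} axiom is then almost immediate: assign $\deg h[-i]z^i = 0$ by declaring $\deg z=-1$ and $\deg h[j]=j$ as in the statement; each factor $h[-i]z^i/i$ in the exponents is degree $0$, $D_{h',h}$ is degree $0$ by condition~(i) together with the fact that it intertwines vacuum vectors and commutes with all $h''[j]$, $j\ne0$ (so it preserves the grading), hence $B^{h',h}(z)$ is degree $0$ as a generating function, which is exactly the statement that its $z^{-k}$-coefficient $B^{h',h}[k]$ has degree $k$. Finally, for \emph{normalisation}, I would evaluate $v_{h'}B^{h',h}(z)$: the operator $E_+(z)=\exp(\sum_{i>0}h[-i]z^i/i)$ applied on the right to $v_{h'}$ — recall these are \emph{right} modules — involves $h[-i]$ with $-i<0$, i.e. creation operators acting to raise degree, so $v_{h'}E_+(z)=v_{h'}$ only if one is careful about which side annihilates; in fact $v_{h'}$ is annihilated by $h''[i]$ for $i>0$ (these act by $0$ in the induced module, up to the $p(0)$ term which vanishes for $i>0$), so $v_{h'}$ applied to $E_+$ — whose exponent contains only the $h[-i]$, $i>0$, which are \emph{lowering}/annihilation on the right — returns $v_{h'}$; then $v_{h'}D_{h',h}=v_{h'+h}$ by~(i); then $v_{h'+h}E_-(z)$ expands as $v_{h'+h}$ plus strictly positive-degree terms, so the degree-$0$ part is $v_{h'+h}$, and extracting the coefficient of $z^0$ gives $v_{h'}B^{h',h}[0]=v_{h'+h}$ as required. (I would double-check the direction conventions here against the paper's right-module setup; that bookkeeping, rather than any real difficulty, is where an error would most likely creep in.) Assembling the three verified axioms shows the coefficients of $B^{h',h}(z)$ form a vertex operator, completing the proof. $\qquad\square$
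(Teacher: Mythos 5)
Your overall strategy is exactly the paper's: verify the three axioms directly, treating the commutator relation by commuting $h''[j]$ through each of the three factors of $B^{h',h}(z)$ separately, with the $j=0$ case reduced to the commutator with $D_{h',h}$. (Your observation that $h''[0]$ is central and acts by the scalars $(h',h'')$ and $(h'+h,h'')$ on the two Fock spaces is in fact a slightly cleaner way to handle $j=0$ than the paper's evaluation on the vacuum.) The homogeneity and normalisation arguments likewise coincide with the paper's.

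However, the computational heart of your argument is wrong as written. First, the case analysis is backwards: the factor $\exp\bigl(\sum_{i<0}h[-i]z^i/i\bigr)$ involves the modes $h[1],h[2],\ldots$, and since $[h''[j],h[n]]$ is proportional to $\delta_{j+n,0}$ it is \emph{this} factor that fails to commute with $h''[j]$ precisely when $j<0$ (the surviving term being $i=j$), while the factor $\exp\bigl(\sum_{i>0}h[-i]z^i/i\bigr)$, built from negative modes, contributes precisely when $j>0$; you assert the opposite, and your formula $[h''[j],h[-i]]=(h,h'')j\,\delta_{-i+j,0}$ both has the wrong sign (with the paper's bracket $(h_1,h_2)\Res p\,dq$ it is $-(h'',h)\,i\,\delta_{j-i,0}$) and contradicts your own prose identification of the surviving term as $i=-j$. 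Second, and more seriously, your final answer $-(h,h'')z^{-j}B^{h',h}(z)$ is not the required relation: for $p(t)=t^j$ the axiom demands $-(h,h'')z^{j}B^{h',h}(z)$, and the correct computation does produce $z^{j}$, not $z^{-j}$; taken literally, your computation verifies the wrong identity. A similar left/right confusion infects the normalisation step: in a right module the factor acting first on $v_{h'}$ is $\exp\bigl(\sum_{i<0}h[-i]z^i/i\bigr)$, whose modes $h[n]$ with $n>0$ annihilate the vacuum, while the other exponential consists of creation operators applied last; you apply the factors in the opposite order and misidentify which modes annihilate $v_{h'}$, though the conclusion $v_{h'}B^{h',h}[0]=v_{h'+h}$ happens to survive. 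These are fixable bookkeeping errors rather than a missing idea, but as stated the verification of the commutator axiom does not go through.
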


\begin{proof}
Homogeneity is obvious; the normalisation property is also quite straightforward, if one takes into account the fact that all the operators $h[j]$ for $j>0$ annihilate the vacuum vector. To prove the commutator relations, note that for each $j$ the element $h''[j]$ commutes with two of three factors in the formula for $B^{h,h'}(z)$. Let us consider the remaining factor in each case. For $j<0$ we have
\begin{multline*}
\left[h''[j], \exp\left(\sum_{i<0} h[-i]\frac{z^{i}}{i}\right)\right]=\\=
\left[h''[j], \sum_{i<0}h[-i]\frac{z^{i}}{i}\right]\exp\left(\sum_{i<0} h[-i]\frac{z^{i}}{i}\right)=\\=
-(h'',h)z^j \exp\left(\sum_{i<0} h[-i]\frac{z^{i}}{i}\right).
\end{multline*}
In the case $j>0$,
\begin{multline*}
\left[h''[j], \exp\left(\sum_{i>0} h[-i]\frac{z^{i}}{i}\right)\right]=\\=
\left[h''[j], \sum_{i>0}h[-i]\frac{z^{i}}{i}\right]\exp\left(\sum_{i>0} h[-i]\frac{z^{i}}{i}\right)=\\=
-(h'',h)z^j \exp\left(\sum_{i>0} h[-i]\frac{z^{i}}{i}\right).
\end{multline*}
Finally, for $j=0$ to compute the commutator $[h'',D_{h,h'}]$ it is enough to compute it on the vacuum vector (since both operators commute with the operators $h[i]$ for $i\ne0$, and the Fock space is generated from the vacuum vector by these operators):
\begin{multline*}
v_h[h'',D_{h,h'}]=v_hh''D_{h,h'}-v_hD_{h,h'}h''=\\=(h'',h')v_{h'+h}-(h'',h'+h)v_{h'+h}=-(h'',h)v_{h'+h},
\end{multline*}
and our proposition follows.
\end{proof}

From now on by vertex operators we mean those constructed in this proposition. (One can prove that in the case of a nondegenerate bilinear form $(\cdot,\cdot)$ there are no other vertex operators.)

\begin{proposition}\label{rel}
Vertex operators satisfy the following quadratic relations\footnote{These are typically not relations in the usual sense, since they involve infinite sums. Still, in all our representations all but a finite number of summands annihilate each vector, so both sides are well defined operators and the statement makes sense. Nevertheless, for the cases of particular interest to us the sums will be finite.}
\begin{multline*}
\nu_0 V_{h',h_1}[i] V_{h'+h_1,h_2}[j]+\nu_1 V_{h',h_1}[i+1] V_{h'+h_1,h_2}[j-1]+\ldots=\\
=\nu_0 V_{h',h_2}[j] V_{h'+h_2,h_1}[i]+\nu_1 V_{h',h_2}[j+1] V_{h'+h_2,h_1}[i-1]+\ldots,
\end{multline*}
where $\sum\limits_{i\ge0}\nu_lt^l=(1-t)^{-(h_1,h_2)}$.
\end{proposition}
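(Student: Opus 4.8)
The plan is to reduce the identity to the well-known operator product expansion (OPE) for lattice vertex operators, which is most transparent in the language of formal power series. First I would write the generating-function form of the claimed relation: multiply the left-hand side by $z^{-i}w^{-j}$ and sum over all $i,j$, and do likewise on the right. Using $\sum_l \nu_l t^l = (1-t)^{-(h_1,h_2)}$, the left-hand side becomes $(1 - w/z)^{-(h_1,h_2)}\, V_{h',h_1}(z)\,V_{h'+h_1,h_2}(w)$ and the right-hand side becomes $(1 - z/w)^{-(h_1,h_2)}\, V_{h',h_2}(w)\,V_{h'+h_2,h_1}(z)$, where the two sides are understood as expansions in the regions $|z|>|w|$ and $|w|>|z|$ respectively. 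So the quadratic relation is precisely the statement that these two rational-function-valued expressions are the analytic continuations of one another; equivalently, after clearing the (single, scalar-valued) factor $(z-w)^{(h_1,h_2)}$, the two products agree as honest formal series identities, which is the standard ``locality'' statement.

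The key computational step is the normal-ordering of the product of two vertex operators. Using the factorisation from Proposition~\ref{exist}, write $V_{h',h_1}(z) = A_-(z)\, D_{h',h_1}\, A_+(z)$ and $V_{h'+h_1,h_2}(w) = B_-(w)\, D_{h'+h_1,h_2}\, B_+(w)$, where $A_\pm, B_\pm$ are the exponentials of the negative/positive modes of $h_1$ resp.\ $h_2$. I would commute $A_+(z)$ past $B_-(w)$ using the Heisenberg bracket $[h_1[i], h_2[-j]] = (h_1,h_2)\, i\, \delta_{ij} K$: the Baker--Campbell--Hausdorff identity $e^X e^Y = e^Y e^X e^{[X,Y]}$ applies because $[X,Y]$ is central, and one computes
$$
\exp\!\left(\sum_{i>0} h_1[-i]\frac{z^i}{i}\right)\exp\!\left(\sum_{j<0} h_2[-j]\frac{w^j}{j}\right) = \exp\!\left(-(h_1,h_2)\sum_{i>0}\frac{1}{i}\Bigl(\frac{w}{z}\Bigr)^i\right)\cdot(\text{reordered product}),
$$
and the scalar prefactor is exactly $\exp\bigl((h_1,h_2)\log(1 - w/z)\bigr) = (1 - w/z)^{(h_1,h_2)}$, valid as a power series in $w/z$. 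The operators $D$ commute with all nonzero Heisenberg modes, so they pass through freely; one must track only the zero-mode action of $D_{h',h_1}$ on the $A_+$-type factor of the second operator, but by condition~(ii) in Proposition~\ref{exist} this contributes nothing beyond relabelling the Fock space. The upshot: $(1 - w/z)^{-(h_1,h_2)} V_{h',h_1}(z) V_{h'+h_1,h_2}(w)$ equals a normal-ordered expression $:\!V V\!:(z,w)$ that is manifestly symmetric under simultaneously swapping $(z,h_1)\leftrightarrow(w,h_2)$, since the normal-ordered product involves no reordering and hence no prefactor. Running the same computation with the roles of $z$ and $w$ (and $h_1$ and $h_2$) interchanged produces the right-hand side with prefactor $(1 - z/w)^{-(h_1,h_2)}$, and the two normal-ordered expressions literally coincide. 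Extracting the coefficient of $z^{-i}w^{-j}$ on each side, and using that $(1-t)^{-(h_1,h_2)} = \sum_l \nu_l t^l$, yields the stated relation termwise.

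The main obstacle is bookkeeping rather than conceptual: one must be careful that the two sides are expansions of the same rational function in \emph{different} domains, so the equality of coefficients only holds after multiplying through by the prefactors $(1\mp w/z)^{\mp(h_1,h_2)}$ as formal series — this is exactly why the coefficients $\nu_l$ appear and why both sides of the relation are infinite sums in general. A secondary point to get right is the sign and normalisation in the Heisenberg commutator and in the exponents $z^i/i$ versus $z^{-i}$, so that the contraction produces precisely $(1-w/z)^{(h_1,h_2)}$ and not its inverse or a shifted power; I would verify this on the lowest-order term (the coefficient of $w/z$) to fix conventions before writing the general argument. Once the normal-ordering lemma is in place, the theorem is immediate, and in the degenerate case of interest (kernel of codimension one) all the sums involved are finite, so no convergence subtleties arise at all.
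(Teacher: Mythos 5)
Your proposal is correct and follows essentially the same route as the paper: normal-order the product of the two vertex operators using the Baker--Campbell--Hausdorff identity for operators with central commutator, extract the scalar contraction factor, note that the normal-ordered remainder is symmetric in the two factors, and compare the two orderings coefficientwise. The only issue is the $z\leftrightarrow w$ bookkeeping you yourself flag as needing to be fixed: the contraction of $\exp\bigl(\sum_{i>0}h_1[-i]\tfrac{z^i}{i}\bigr)$ with $\exp\bigl(\sum_{j<0}h_2[-j]\tfrac{w^j}{j}\bigr)$ equals $(1-z/w)^{(h_1,h_2)}$, a series in $z/w$, so the left-hand side of the stated relation is governed by the prefactor $(1-z/w)^{-(h_1,h_2)}$ rather than $(1-w/z)^{-(h_1,h_2)}$ (and symmetrically for the right-hand side), matching the paper's displayed identity.
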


\begin{proof}
The key ingredient of the proof is the following classical statement from linear algebra (which is also a special case of the Baker--Campbell--Hausdorff formula from the theory of Lie algebras):
\begin{lemma}
If $U$ and $V$ are linear operators for which $[U,V]$ commutes with~$U$ and $V$, then
 $$
e^U e^V=e^{[U,V]}e^V e^U.
 $$ 
\end{lemma}
To prove the formula, one should consider the exponential formula for the product $V_{h',h_1}(z)V_{h'+h_1,h_2}(w)$ and apply the previous lemma to exchange two factors: the factor 
 $$
\exp\left(\sum_{i>0}h_1[-i]\frac{z^{i}}{i}\right)
 $$
of $V_{h',h_1}(z)$ and the factor
 $$
\exp\left(\sum_{i<0}h_2[-i]\frac{w^{i}}{i}\right)
 $$
of $V_{h'+h_1,h_2}(w)$. The ``cost'' of this exchange is the factor 
 $$
\exp\left(\left[\sum_{i>0}h_1[-i]\frac{z^{i}}{i}, \sum_{i<0}h_2[-i]\frac{w^{i}}{i}\right]\right)
 $$
which is, as it is easy to check, equal to
 $$
\left(1-\frac{z}{w}\right)^{(h_1,h_2)}.
 $$
The same can be done for the product $V_{h',h_2}(w)V_{h'+h_2,h_1}(z)$, and as a result we get
\begin{multline*}
\left(1-\frac{z}{w}\right)^{-(h_1,h_2)}V_{h',h_1}(z)V_{h'+h_1,h_2}(w)=\\=\left(1-\frac{w}{z}\right)^{-(h_1,h_2)}V_{h',h_2}(w)V_{h'+h_2,h_1}(z)
\end{multline*}
(note that the $[h_1[i],h_2[j]]=0$ for $i$ and $j$ of the same sign), which, after formal expansion, gives precisely the relations that we claimed.
\end{proof}

\begin{definition}
Fix a basis $f_1,\ldots,f_k$ of $H$. Denote by $\frV$ the vector space $\bigoplus_{h\in H}\calF_h$. For $p=1,\ldots,k$ let us define operators $U_p[i]$ on this direct sum as follows: 
 $$
\left.U_p[i]\right|_{F_{h}}:=V_{h,f_p}[i+(h,f_p)].
 $$ 
\end{definition}

The following proposition is obvious.

\begin{proposition}\label{simp-rel}
Quadratic relations from Proposition~\ref{rel} can be rewritten as
\begin{multline*}
\nu_0 U_p[i] U_q[j-N]+\nu_1 U_p[i+1] U_q[j-N-1]+\ldots=\\
=\nu_0 U_q[j] U_p[i-N]+\nu_1 U_q[j+1] U_p[i-N-1]+\ldots,
\end{multline*}
where $\sum\limits_{i\ge0}\nu_lt^l=(1-t)^{-N}$, $N=(f_p,f_q)$.
\end{proposition}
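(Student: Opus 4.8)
The plan is to derive Proposition~\ref{simp-rel} purely formally from Proposition~\ref{rel} by unwinding the definition of the operators $U_p[i]$. Since the statement that ``the following proposition is obvious'' is made in the paper, the proof should be short: the only content is bookkeeping with the shifts $(h,f_p)$ built into the definition $\left.U_p[i]\right|_{\calF_h}=V_{h,f_p}[i+(h,f_p)]$.

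First I would fix a Fock space summand $\calF_{h'}$ of $\frV$ and expand both sides of the relation in Proposition~\ref{rel} with $h_1=f_p$, $h_2=f_q$. On the left, $V_{h',f_p}[a]V_{h'+f_p,f_q}[b]$ restricted to $\calF_{h'}$ equals $U_p[a-(h',f_p)]\,U_q[b-(h'+f_p,f_q)]$ composed on that summand. Writing $a = i+(h',f_p)$ converts the first factor to $U_p[i]$; then the second index becomes $b-(h'+f_p,f_q) = b-(h',f_q)-(f_p,f_q) = b-(h',f_q)-N$. The successive terms in the sum on the left of Proposition~\ref{rel} have $a$ increased by $\ell$ and $b$ decreased by $\ell$, so after the substitution the $\ell$-th term reads $\nu_\ell\,U_p[i+\ell]\,U_q[j-N-\ell]$ once we also set $j=b_0-(h',f_q)$ for the initial value $b_0$ of the second index. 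This is exactly the left-hand side of Proposition~\ref{simp-rel}.

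Next I would do the mirror-image computation for the right-hand side. There $V_{h',f_q}[b]V_{h'+f_q,f_p}[a]$ on $\calF_{h'}$ becomes $U_q[b-(h',f_q)]\,U_p[a-(h'+f_q,f_p)]$, and the same substitutions $b=j+(h',f_q)$, together with $a-(h'+f_q,f_p)=a-(h',f_p)-N = i-N$ after using $a=i+(h',f_p)$, turn the $\ell$-th term into $\nu_\ell\,U_q[j+\ell]\,U_p[i-N-\ell]$. Comparing with the substituted left-hand side gives precisely the claimed identity, with the same coefficients $\nu_\ell$ determined by $\sum_{\ell\ge0}\nu_\ell t^\ell=(1-t)^{-N}$, $N=(f_p,f_q)$. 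Since $h'$ was arbitrary and $\frV=\bigoplus_h \calF_h$, the operator identity holds on all of $\frV$.

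I do not expect any genuine obstacle here: the only thing to be careful about is that the shift depends on the source Fock space, so that when one writes the \emph{composite} $U_p[i]U_q[\cdots]$ the second operator is applied on $\calF_{h'+f_p}$, not $\calF_{h'}$, which is exactly why the combination $-(h',f_q)-(f_p,f_q)$ rather than just $-(h',f_q)$ appears and produces the uniform extra $-N$ in the $q$-index; a parallel remark explains the $i-N$ in the $p$-index on the right. If I wanted to make the argument even more transparent I would first record the trivial intertwining identity $\left.V_{h',h_1}[a]V_{h'+h_1,h_2}[b]\right|_{\calF_{h'}} = \left.U_{p}[a-(h',h_1)]U_{q}[b-(h',h_2)-(h_1,h_2)]\right|_{\calF_{h'}}$ as a lemma and then just substitute into Proposition~\ref{rel}; but given that the paper itself labels this ``obvious'', a one-paragraph version of the above suffices.
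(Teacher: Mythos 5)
Your computation is correct: substituting $h_1=f_p$, $h_2=f_q$ into Proposition~\ref{rel} and rewriting each $V$-component via $\left.U_p[i]\right|_{\calF_h}=V_{h,f_p}[i+(h,f_p)]$, with the crucial observation that the second factor acts on $\calF_{h'+f_p}$ (resp.\ $\calF_{h'+f_q}$) so that its shift picks up the extra $(f_p,f_q)=N$, yields exactly the claimed relation. This is precisely the routine bookkeeping the paper omits by declaring the proposition obvious, so your argument supplies the intended proof rather than an alternative one.
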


\begin{definition}
Let $h\in H$. Denote by $\frV_h$ the minimal linear subspace of $\frV$ containing~$v_{h}\in \calF_h\subset\frV$ and stable under the action of all the~$U_p[i]$. Since $$v_{h} U_p[-(h,f_p)]=v_{h+f_p},$$ it is clear that~$\frV_{h+f_p}\subset \frV_{h}$. Denote by $\overline{\frV}$ the union of these subspaces.
\end{definition}

\begin{proposition}\label{gen-span}
The subspace $\overline{\frV}$ coincides with~$\frV$. In other words, not only the vertex operators can be defined in terms of the Heisenberg algebra, but also the Heisenberg algebra operators can be recovered from vertex operators, if we know all vacuum states~$v_h$.
\end{proposition}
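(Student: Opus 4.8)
The plan is to show that $\overline{\frV}=\frV$ by proving that each Fock space $\calF_h$ lies in $\overline{\frV}$, and moreover that $\overline{\frV}$ is stable under the action of the Heisenberg operators $h''[j]$ — the latter reformulation being exactly the ``boson--fermion correspondence'' content of the statement. First I would fix $h\in H$ and observe that, since $\frV_{h+f_p}\subset\frV_h$ for every basis vector $f_p$, and since the $f_p$ form a basis of the lattice $H$, the subspace $\frV_{h_0}$ for $h_0$ sufficiently ``deep'' in the positive cone contains the vacuum vectors $v_h$ of all $\calF_h$ with $h$ in a full-dimensional sublattice; so it suffices to show that a single $\frV_h$ already contains all of $\calF_{h'}$ for the relevant $h'$. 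Thus the real task reduces to: starting from $v_{h'}$ and applying the operators $U_p[i]$, can we generate every homogeneous vector of $\calF_{h'}$?

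The key mechanism is the commutator relation $[h''[j],V_{h',h}[i]]=-(h,h'')V_{h',h}[j+i]$ from the definition of a vertex operator, which in the $U$-normalisation reads (after the shift in Proposition~\ref{simp-rel}) as a relation expressing $h''[j]\,U_p[i]-U_p[i]\,h''[j]$ in terms of $U_p[j+i]$ up to scalars. The plan is to run an induction on degree. At degree $0$ we have $v_{h'}\in\frV_{h'}$. Suppose every homogeneous component of $\calF_{h'}$ of degree $>-m$ is already in $\frV_{h'}$; a spanning set for the degree $-m$ component of the Fock space is given by the vectors $v_{h'}\,h'_1[j_1]\cdots h'_r[j_r]$ with $j_1+\cdots+j_r=-m$, all $j_s<0$ (acting on the right). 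Using the commutator relation, I would rewrite $v_{h'}\cdots h''[j]$ in terms of expressions $v_{h'}\cdots U_p[i]$ plus terms of strictly higher degree, which are in $\frV_{h'}$ by the induction hypothesis; but to make this go through I need the scalar $(f_p,h'')$ to be nonzero for a suitable choice of $p$, which is where the hypothesis on the bilinear form — more precisely, the fact that the $U_p[i]$ with $p$ ranging over a basis of $H$ give us access to all directions — enters. One has to be a little careful: the operator $U_p[i]$ changes the sector from $\calF_h$ to $\calF_{h+f_p}$, so one genuinely needs to chase a path through several sectors and only at the end land back in $\calF_{h'}$; this bookkeeping, rather than any deep idea, is the technical core.

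The main obstacle I anticipate is exactly this degenerate-form bookkeeping: in the case of central interest to the paper the form has a one-dimensional kernel, so some $h''$ are ``invisible'' to all the vertex operators (the scalar $(h,h'')$ vanishes identically), and one must check that the corresponding Heisenberg direction can nonetheless be recovered — presumably because acting by $U_p[i]$ for two different $i$ and comparing, via the quadratic relations of Proposition~\ref{simp-rel}, still produces enough operators to separate all of $\calF_{h'}$ degree by degree, or because $\frV$ is assembled from the various $\calF_h$ in a way that the missing direction is supplied by the translation $v_h\mapsto v_{h+f_p}$. I would handle this by first treating the nondegenerate case cleanly by the induction above, and then arguing that adding a null direction to the lattice only enlarges $\frV$ by tensoring with a polynomial algebra on which the relevant $U_p[i]$ still act by the exponential prefactors $\exp(\sum_{i<0}f_p[-i]z^i/i)$, so the same generation argument applies verbatim. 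A short dimension/character count (comparing graded dimensions of $\overline{\frV}$ and $\frV$ in each sector) would then close the argument.
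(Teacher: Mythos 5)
There is a genuine gap, and it sits exactly where you suspected trouble. Your main mechanism is the commutator relation $[h''[j],U_p[i]]=-(f_p,h'')U_p[i+j]$ together with an induction on degree, but this relation only ever expresses \emph{commutators} of $h''[j]$ with vertex-operator components in terms of further vertex-operator components; at the crucial step of the induction you need to produce the vector $v_{h'}h''[j]$ itself (for $j<0$), and commuting $h''[j]$ through a $U$-monomial always leaves behind exactly this unresolved term $v_{h}h''[j](\cdots)$ of the same degree $j$. Moreover, in the lattice actually used in the paper one has $(f_p,f_q)=-1$ for \emph{all} $p,q$, so the kernel of the form is the $(k-1)$-dimensional span of the differences $f_p-f_q$; for $h''$ in this kernel the scalar $(f_p,h'')$ vanishes for every $p$ simultaneously, so there is no ``suitable choice of $p$''. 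Your proposed repair --- treat the nondegenerate case first and then adjoin null directions by tensoring with a polynomial algebra --- does not match this lattice: none of the $f_p$ is isotropic, the null directions are not spanned by basis vectors and carry no vertex operators of their own, so the lattice does not decompose compatibly with the given basis. Finally, the closing ``character count'' is not available at this stage: computing the character of $\overline{\frV}$ is precisely what the rest of Section~\ref{parking} does, \emph{using} this proposition to know that the $U$-monomials span.

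The paper's proof bypasses the bilinear form entirely, and it is essentially the idea you mention only in passing at the end (``the exponential prefactors''). One first checks that $\overline{\frV}$ is stable under the annihilation operators $h[j]$, $j\ge0$, and under the operators $D_p$; it is stable under the $U_p[i]$ by definition. From the factorization
$$B^{h',f_p}(z)=\exp\Bigl(\sum_{i<0}f_p[-i]\tfrac{z^i}{i}\Bigr)\,D_{h',f_p}\,\exp\Bigl(\sum_{i>0}f_p[-i]\tfrac{z^i}{i}\Bigr)$$
one then deduces stability under the components of the creation exponential $\exp\bigl(\sum_{i>0}f_p[-i]z^i/i\bigr)$, whose coefficients involve the $f_p[-i]$ \emph{directly}, not through the pairing; taking the logarithm of this generating function recovers each individual creation operator $f_p[-j]$, and since $f_1,\ldots,f_k$ is a basis of $H$ this yields the whole Heisenberg algebra. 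Then $\overline{\frV}$ contains all vacua and is Heisenberg-stable, hence equals $\frV$. To fix your write-up, promote the exponential-prefactor observation from a patch for the degenerate case to the core of the argument, and drop the commutator-based induction.
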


\begin{proof}
Define the operators $B^{h',h,-}[i]\colon\calF_{h'}\to\calF_{h'}$ as follows:
 $$
\sum_{k\in\mathbb{Z}}B^{h',h,-}[i]z^{-i}=\exp\left(\sum_{j>0} h[-j]\frac{z^{j}}{j}\right).
 $$ 
Denote by $D_p$ the operator on~$\frV$ which is defined on each component~$\calF_{h}$ as the corresponding operator~$D_{h,f_p}$ from Proposition~\ref{exist}.

It is easy to see that the space~$\overline{\frV}$ is stable under the action of the operators~$h[j]$ for~$h\in H, j\ge0$, and~$D_p$ for $p=1,\ldots,k$. By the definition, it is stable under the action of the operators~$U_p[i]$. Thus, from the explicit formula for the vertex operator, we see that the space~$\overline{\frV}\cap\calF_{h'}$ is stable under the action of the operators~$B^{h',f_p,-}[i]$ for all~$p=1,\ldots,k$ and all~$i$. Taking the logarithm of their generating function, we obtain the generating function for the operators $f_p[i]$ with $i<0$ (and thus $h[i]$ for $h\in H$, $i<0$). This means that these operators also leave the subspace~$\overline{\frV}$ invariant. Thus we obtained a subspace of~$\frV$ containing all the vectors~$v_{l}$ which is stable under the action of the Heisenberg algebra. Such a subspace coincides with~$\frV$ for obvious reasons.
\end{proof}

\subsection{A particular case}

Further in this paper we are going to consider the $k$-dimensional lattice 
 $$B=\bigoplus_{p=1}^k\mathbb{Z}f_p$$
with $(f_p,f_q)=-1$. According to the previous section, we assign to this lattice the vector space~$\frV$ (the direct sum of the Fock representations).  

\begin{proposition}
The vector space~$\frV$ is naturally endowed with a structure of a representation of the algebra $\frB$ defined in the introduction.
\end{proposition}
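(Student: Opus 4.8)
The plan is to write down an explicit homomorphism of associative algebras $\frB(k)\to\mathrm{End}(\frV)$ sending each generator $e_p[i]$ to the operator $U_p[i]$ introduced above, and to check that it respects the defining relations; by the conventions of this paper $\frV$ then becomes a (right) $\frB(k)$-module, the operators acting on the right. Since the $U_p[i]$ are already well-defined linear operators on $\frV=\bigoplus_{h\in H}\calF_h$, the only thing that needs proof is that, for all colours $p,q$ and all indices $i,j$, the anticommutator $\{U_p[i],U_q[j]\}$ depends only on the sum $i+j$, i.e. that $\{U_p[i],U_q[j]\}=\{U_p[i+1],U_q[j-1]\}$.

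The key step is to specialise Proposition~\ref{simp-rel} to the lattice $B$ at hand, where $N=(f_p,f_q)=-1$ for every pair $p,q$. Then $\sum_{l\ge0}\nu_l t^l=(1-t)^{-N}=1-t$, so $\nu_0=1$, $\nu_1=-1$, and $\nu_l=0$ for $l\ge2$; in particular the quadratic relations of Proposition~\ref{rel}, which are infinite sums in general, here collapse to genuine two-term identities, so the convergence caveat of the footnote is vacuous in our case. Substituting these values together with $j-N=j+1$ and $i-N=i+1$, the relation of Proposition~\ref{simp-rel} reads
$$U_p[i]\,U_q[j+1]-U_p[i+1]\,U_q[j]=U_q[j]\,U_p[i+1]-U_q[j+1]\,U_p[i],$$
and regrouping turns this into $\{U_p[i],U_q[j+1]\}=\{U_p[i+1],U_q[j]\}$, which after renaming $j+1\mapsto j$ is precisely the defining relation of $\frB(k)$.

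Hence the assignment $e_p[i]\mapsto U_p[i]$ extends to an algebra homomorphism, and $\frV$ acquires the structure of a $\frB(k)$-module, which is the claim. I expect no real obstacle here: the entire content of the proposition is the special value $N=-1$ and the short reindexing above, and one should only note that an anticommutator relation is preserved regardless of whether the assignment is read as a homomorphism or as an anti-homomorphism (so the right-module convention causes no trouble). For $p=q$ the same computation yields the further identities $\{U_p[i],U_p[j]\}=\{U_p[i+1],U_p[j-1]\}$, which are not imposed in $\frB(k)$ but are harmless, the requirement being merely that the defining relations be respected. The substantive work — identifying the subspaces $\frV_h$ and the associated $\frB(k)$-subquotients with parking-function modules — is what Section~\ref{parking} is devoted to.
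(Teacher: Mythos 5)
Your proof is correct and follows essentially the same route as the paper: both specialise Proposition~\ref{simp-rel} to $N=(f_p,f_q)=-1$, so that $(1-t)^{-N}=1-t$ truncates the quadratic relations to a two-term identity, which regroups into the anticommutator relation defining $\frB(k)$. (One tiny quibble: the defining relations of $\frB(k)$ are stated ``for all $i,j,p,q$,'' so the $p=q$ case is in fact imposed, but as you note it holds for the $U_p[i]$ anyway, so nothing changes.)
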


\begin{proof}
We use the result of Proposition~\ref{simp-rel}: 
 $$
 U_p[i] U_q[j+1]- U_p[i+1] U_q[j]= U_q[j] U_p[i+1] - U_q[j+1] U_p[i]
 $$
(since $\sum\limits_{i\ge0}\nu_lt^l=1-t$, $N=-1$), which can be rewritten as
 $$
\{U_p[i],U_q[j+1]\}=\{U_p[i+1],U_q[j]\}, 
 $$
so the operators $U_p[i]$ satisfy the relations of the algebra~$\frB$.
\end{proof}

The remaining part of the paper is mostly devoted to the proof of the statements formulated in the introduction. For the proof, we use some kind of ``semiinfinite bases'' of our vector spaces, which then will be used to obtain bases of the vector spaces we are interested in.  

\section{The main theorem}\label{parking}

In this section, we only consider vertex operators associated the lattice~$B$, and we use the notation $e_p[i]$ for the operator $U_p[i]$. From the previous section we know that our vector space is spanned by all monomials 
 $$
v_h e_{p_1}[i_1]\ldots e_{p_s}[i_s]
 $$
for all $h\in H$, and all sequences $p,i$. Our next goal is to define a certain subset of all monomials of this type and then prove that our monomials form a basis of~$\frV$.

\subsection{Admissible monomials}
\begin{definition}\label{park-usl}
We call a monomial
 $$
e_{h,\mathbf{p},\mathbf{i}}=v_h e_{p_1}[i_1]\ldots e_{p_s}[i_s]
 $$
\emph{admissible}, if the following conditions are satisfied:
\begin{enumerate}
 \item $i_1\le-(h,f_{p_1})$;
 \item $i_{m+1}\le i_m+1$ for all $m=1,\ldots,s-1$;
 \item if $i_{m+1}=i_m+1$, then $p_m\le p_{m+1}$.
\end{enumerate}
Clearly, 
 $$
v_h e_{p_1}[-(h,f_1)]e_{p_2}[i_2]\ldots e_{p_s}[i_s]=v_{h+f_1} e_{p_2}[i_2]\ldots e_{p_s}[i_s],
 $$
and in this case we think of these monomials as of the same monomial. Throughout this paper we mean by an admissible monomial the class of pairwise equal admissible monomials; we hope it will not lead to a confusion.
\end{definition}

\begin{remark}
There is another way to think about the latter definition of equivalence classes of monomials. Let us discuss it in the simplest case when our lattice has rank one. The operator~$e_1[lf_1]$ defines an embedding of vector spaces $\frV_{(l+1)f_1}\hookrightarrow\frV_{lf_1}$. Thus the space $\frV=\bigcup_{l}\frV_{lf_1}$ can be thought of as the linear span of ``semiinfinite monomials'' 
 $$
e_a=\ldots e_1[a_{-k}]e_1[a_{1-k}]\ldots e_1[a_{-2}]e_1[a_{-1}],
 $$ 
where the semiinfinite (infinite to the left) sequence of integers $a=\{a_i\}_{i\le-1}$ satisfies~$a_{i+1}=a_i+1$ for all sufficiently small $i$. The monomials that we consider to be equal are just different choices of the ``tail'' for the same semiinfinite monomial. 
\end{remark}

\begin{lemma}\label{park-span}
The vector space $\frV$ is spanned by all admissible monomials 
\end{lemma}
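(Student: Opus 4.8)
The plan is to show that every monomial $v_h e_{p_1}[i_1]\ldots e_{p_s}[i_s]$ can be rewritten, using the quadratic relations of Proposition~\ref{simp-rel} (equivalently, those of the algebra $\frB$) together with the identities $v_h e_{p}[-(h,f_p)] = v_{h+f_p}$, as a linear combination of admissible monomials. I would set up a well-founded induction so that each application of a relation strictly decreases some complexity measure.

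First I would dispose of condition~(1): given an arbitrary monomial, if $i_1 > -(h,f_{p_1})$ then there is nothing forcing it down, but if $i_1 < -(h,f_{p_1})$, note that the vacuum relations only allow us to absorb a leftmost generator with $i_1 = -(h,f_{p_1})$ exactly; so instead I would treat $h$ and the whole left tail on an equal footing by working with the semiinfinite picture of the Remark, or equivalently by always choosing the representative of the equivalence class for which $i_1 \le -(h,f_{p_1})$ holds and then never touching the leftmost slot. Then the real content is conditions~(2) and~(3). For these I would define, on a monomial $e_{p_1}[i_1]\ldots e_{p_s}[i_s]$, a "badness" statistic: for instance the lexicographically-ordered multiset of the exponents read in a suitable way, or more concretely a weight like $\sum_m c^{m}\,\mathbf{1}[i_{m+1} > i_m + 1]$ plus a term detecting violations of the colour order, chosen so that the quadratic relation of $\frB$, applied to a consecutive pair violating~(2) or~(3), produces only monomials of strictly smaller weight. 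The crucial structural point is that the relation $\{e_p[i],e_q[j+1]\} = \{e_p[i+1],e_q[j]\}$ lets one push the pair $(i_m,i_{m+1})$ toward having difference at most $1$, and when the difference equals $1$ the relation lets one swap colours, handling~(3); one must check that the "new" terms introduced (the $e_q[j]e_p[i+1]$ etc.) indeed have smaller statistic, which is a bookkeeping argument about how the relation redistributes exponent-sums among adjacent slots.

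The main obstacle I anticipate is designing the termination measure so that it is simultaneously decreased by all the moves and is well-founded. The subtlety is that straightening a pair at position $m$ can worsen the comparison between positions $m-1$ and $m$ or between $m+1$ and $m+2$; this is the classic "overlap" problem that makes a naive induction fail. The standard remedy is to order monomials first by total degree $\sum i_m$ (which is preserved, so it doesn't help) — better, to order by the sequence of partial sums $(i_1, i_1+i_2, \ldots)$, or by the sorted sequence of exponents, under a carefully chosen lexicographic convention, and then argue that each relation lowers this. Since the author remarks that these bases already appear in Roitman's work via the Diamond lemma, I expect the honest route here to be: verify that the relations, oriented as rewriting rules turning non-admissible adjacent pairs into admissible combinations, are terminating (by exhibiting the measure above) — and for the spanning statement termination plus the fact that admissible monomials are exactly the irreducible ones is all that is needed; confluence is not required for Lemma~\ref{park-span}, only for the later linear-independence half. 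So concretely: (i) reduce to never touching the leftmost slot, (ii) orient the $\frB$-relations as reduction rules producing admissible right-hand sides, (iii) exhibit a monomial order making these rules terminating, (iv) conclude that iterating the rules expresses any monomial in admissible ones, hence the admissible monomials span $\frV$.
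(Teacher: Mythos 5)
Your overall strategy --- orient the relations of $\frB$ as rewriting rules, show the rewriting terminates, and observe that the irreducible monomials are exactly the admissible ones --- is the right one and is the same in spirit as the paper's proof. But as written the proposal defers the only nontrivial point: you list several candidate termination measures (weighted indicator sums, partial sums, sorted exponents) without committing to or verifying any of them, and the ``bookkeeping argument'' you postpone \emph{is} the proof. Two concrete remarks to close the gap. First, condition~(1) needs no workaround: $v_h e_p[i]=0$ whenever $i>-(h,f_p)$ (the corresponding vertex-operator component has positive degree and kills the vacuum), so any nonzero monomial satisfies~(1) automatically; there is no case ``$i_1>-(h,f_{p_1})$ with nothing forcing it down''.

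Second, the overlap problem you worry about is real but is avoided by the paper in a cleaner way than a global monomial order: induct on the length $s$, and for fixed $s$ induct (downward) on the last exponent $i_s$, with a further induction on the last colour $p_s$ to break ties. One first normalizes the length-$(s-1)$ prefix by the induction hypothesis, then rewrites only the last adjacent pair. Writing the relation as
$$
e_{p_{s-1}}[i_{s-1}]e_{p_s}[i_s]=e_{p_{s-1}}[i_{s-1}+1]e_{p_s}[i_s-1]+e_{p_s}[i_s-1]e_{p_{s-1}}[i_{s-1}+1]-e_{p_s}[i_s]e_{p_{s-1}}[i_{s-1}],
$$
one checks that when $i_s\ge i_{s-1}+2$ every term on the right has strictly smaller last exponent, and when $i_s=i_{s-1}+1$ with $p_{s-1}>p_s$ two of the three terms have strictly smaller last exponent while the third has the same exponents but a strictly smaller last colour. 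Damage done to the prefix is irrelevant: it is absorbed by the induction hypothesis for length $s-1$, and the outer parameters $(i_s,p_s)$ have strictly decreased. If you prefer a genuine global measure for an arbitrary interior rewrite, the same computation shows that right-to-left lexicographic comparison of the exponent sequence $(i_s,i_{s-1},\ldots,i_1)$, tie-broken by colour data, strictly decreases under every oriented rule, since only the rightmost modified exponent matters; this is essentially what the paper's nested induction encodes. Either way you must actually carry out this verification --- it is short, but it is the lemma.
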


\begin{proof}
Note that all monomials without restrictions put on by the admissible monomials conditions span $\frV$ (see Proposition~\ref{gen-span}). It is clear that $v_h e_p[i]=0$ for $i>-(h,f_p)$, so the first condition is not restrictive. Thus it is enough to check that every vector~$v_h e_{p_1}[i_1]\ldots e_{p_s}[i_s]$ can be expressed as a linear combination of vectors of the same kind where the second admissible monomials condition is satisfied. Let us use induction on~$s$, and for the fixed $s$, induction on $i_s$. If~$s=1$, there is nothing to prove. Otherwise we use the induction hypothesis to find a corresponding linear combination for $ve_{p_1}[i_1]\ldots e_{p_{s-1}}[i_{s-1}]$, then multiply it by $e[i_s]$ and rewrite it using the relations, decreasing the parameter $i_s$. There are two possible situations that require some improvement. If $i_s\ge i_{s-1}+2$, then
\begin{multline*}
e_{p_{s-1}}[i_{s-1}]e_{p_s}[i_s]=e_{p_{s-1}}[i_{s-1}+1]e_{p_s}[i_s-1]+\\+e_{p_s}[i_s-1]e_{p_{s-1}}[i_{s-1}+1]-e_{p_s}[i_s]e_{p_{s-1}}[i_{s-1}],
\end{multline*}
where all the summands have the parameter $i_s$ less the original one. Otherwise, if $i_s=i_{s-1}+1$ and $p_{s-1}>p_{s}$, then
 $$
e_{p_{s-1}}[i_s-1]e_{p_s}[i_s]=e_{p_{s-1}}[i_s]e_{p_s}[i_s-1]+e_{p_s}[i_s-1]e_{p_{s-1}}[i_s]-e_{p_s}[i_s]e_{p_{s-1}}[i_s-1],
 $$ 
where the first and the third summand have the parameter $i_s$ less than the original one, and for the second summand the colour of the last factor is less than the original one, so for fixed $s$ and $i_s$ we can proceed by decreasing induction on that parameter. Now the induction hypothesis applies, and the lemma follows.
\end{proof}

Denote by~$\frM_{\infty/2}$ the vector space whose basis elements $m_{h,\mathbf{p},\mathbf{i}}$ are in one-to-one correspondence with admissible monomials. 
For such an element, we define a $\mathbb{Z}$-valued statistic $d_1(m_{h,\mathbf{p},\mathbf{i}})=i_1+\ldots+i_s-\frac12(h-f_1,h)$ and a $H$-valued statistic $d_2(m_{h,\mathbf{p},\mathbf{i}})=h+f_{p_1}+\ldots+f_{p_s}$. Note that the statistics $d_1$ and $d_2$ are well-defined, that is if we compute them for two elements that correspond to equal 
monomials
 $$
v_h e_{p_1}[-(h,f_1)]e_{p_2}[i_2]\ldots e_{p_s}[i_s]=v_{h+f_1} e_{p_2}[i_2]\ldots e_{p_s}[i_s],
 $$
the results will be the same.

Consider the group algebra of $H$ with coefficients in formal Laurent series in~$q$. It is spanned by formal symbols $z^h$, $h\in H$, that satisfy $z^{h_1+h_2}=z^{h_1}z^{h_2}$. For any $\mathbb{Z}\times H$-graded vector space $\mathfrak{L}=\bigoplus\limits_{i\in\mathbb{Z},\,h\in H}\mathfrak{L}_{i,h}$ denote by~$\ch\mathfrak{L}(q,z)$ its character given by the sum $\sum\limits_{i\in\mathbb{Z},\,h\in H}\dim\mathfrak{L}_{i,h}q^iz^h$.

\begin{definition}
Define a $\mathbb{Z}\times H$-grading on the vector space~$\frV$ as follows. On the space~$\calF_{h}\subset\frV$, we put $\deg_1(x):=\deg(x)-\frac12(h-f_1,h)$ and $\deg_2(x):=h$.
\end{definition}

\begin{lemma} 
We have
\begin{equation}\label{park-chV}
\ch\frV(q,z)=
\frac{1}{(q^{-1})_\infty^k}\sum_{h\in H}q^{-\frac12(h-f_1,h)}\,z^{h}.
\end{equation}
\end{lemma}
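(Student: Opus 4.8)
The plan is to compute the character of $\frV$ directly from its definition as a direct sum of Fock spaces, using the standard fact about the graded dimension (partition-type generating function) of a Fock representation of a Heisenberg algebra. First I would recall that $\frV=\bigoplus_{h\in H}\calF_h$, so it suffices to compute $\ch\calF_h(q,z)$ for each $h$ and sum over the lattice. The $H$-grading $\deg_2$ is constant equal to $h$ on the whole summand $\calF_h$, contributing the factor $z^h$. For the $\mathbb{Z}$-grading $\deg_1$, recall that $\calF_h$ is freely generated from the vacuum $v_h$ by the operators $\{f[{-j}] : f\in H,\ j>0\}$; since $H$ has rank $k$, one may fix the basis $f_1,\ldots,f_k$ and observe that $\calF_h$ has a basis of monomials $v_h f_{p_1}[{-j_1}]\cdots f_{p_m}[{-j_m}]$ with $j_r>0$, which as a graded vector space (with respect to the internal grading $\deg(h[j])=j$, hence $\deg(f_p[{-j}])=-j$) has generating function $\prod_{j>0}(1-q^{-j})^{-k}=(q^{-1})_\infty^{-k}$, where I use the notation $(q^{-1})_\infty=\prod_{j>0}(1-q^{-j})$ implicit in the statement.

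Next I would account for the shift in the grading: on $\calF_h$ the paper sets $\deg_1(x)=\deg(x)-\tfrac12(h-f_1,h)$, so the vacuum vector $v_h$, which has internal degree $0$, sits in $\deg_1$-degree $-\tfrac12(h-f_1,h)$, and every Fock basis monomial is obtained from it by lowering the internal degree. Therefore
\begin{equation*}
\ch\calF_h(q,z)=q^{-\frac12(h-f_1,h)}\,z^{h}\prod_{j>0}\frac{1}{(1-q^{-j})^k}=\frac{q^{-\frac12(h-f_1,h)}\,z^{h}}{(q^{-1})_\infty^k}.
\end{equation*}
Summing over all $h\in H$ and pulling the $h$-independent factor $(q^{-1})_\infty^{-k}$ out of the sum gives exactly formula~\eqref{park-chV}.

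The only genuine point that needs care — and what I expect to be the main obstacle, such as it is — is the bookkeeping of the degree shift $-\tfrac12(h-f_1,h)$ and the convergence of the sum over $h$: one must check that, for each fixed pair of degrees $(i,h_0)\in\mathbb{Z}\times H$, only finitely many lattice points $h$ and Fock monomials contribute, so that $\ch\frV(q,z)$ is a well-defined element of the group algebra of $H$ with coefficients in formal Laurent series in $q$. Since the $\deg_2$-grading already pins down $h=h_0$, and for that fixed $h$ the Fock space $\calF_{h_0}$ has finite-dimensional internal graded pieces, finiteness is immediate; the asymmetric shift by $f_1$ (rather than a symmetric $-\tfrac12(h,h)$) is purely a normalisation choice and does not affect the argument. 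This completes the proof.
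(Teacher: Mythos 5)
Your proposal is correct and follows essentially the same route as the paper, which simply observes that each Fock space $\calF_h$ is a polynomial algebra on the generators $f_p[i]$, $i<0$, $1\le p\le k$, giving the factor $(q^{-1})_\infty^{-k}$, with the vacuum contributing the shift $q^{-\frac12(h-f_1,h)}z^h$. Your additional remarks on the well-definedness of the sum over $h$ are a harmless elaboration of what the paper leaves implicit.
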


\begin{proof}
This lemma is straightforward from our definition, and the fact that as a vector space the Fock space is nothing but the space of polynomials in $h_p[i]=f_s[i]$, $i<0$, $1\le p\le k$.
\end{proof}

\begin{lemma}
We have
 $$
\ch\frM_{\infty/2}(q,z)=\ch\frV(q,z).
 $$
\end{lemma}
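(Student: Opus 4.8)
The plan is to show that the character of $\frM_{\infty/2}$, computed by summing over admissible monomials, matches the right-hand side of~\eqref{park-chV}. Since the ambient space $\frV$ decomposes as $\bigoplus_{h\in H}\calF_h$ and each $\calF_h$ is a polynomial ring in the operators $f_p[i]$ with $i<0$, it suffices to prove that the admissible monomials ending in a fixed ``semiinfinite tail'' associated to the vacuum $v_h$ — equivalently, those representing elements of $\frV$ whose $d_2$-statistic lands in the coset determined by $h$ — are enumerated, with respect to the $q$-statistic, by $q^{-\frac12(h-f_1,h)}/(q^{-1})_\infty^k$. Concretely, I would fix the target weight, pick the canonical representative (the one with $i_1 = -(h,f_{p_1})$ so that the remaining monomial begins with $v_{h'}$ for the ``reduced'' $h'$), and set up a weight-preserving bijection between finite admissible monomials and monomials in the $k$ families of negative-mode generators.

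The key combinatorial step is the following normal-form observation. Conditions~(2) and~(3) of Definition~\ref{park-usl} say precisely that an admissible monomial is a concatenation of ``descending runs'': maximal blocks of consecutive factors $e_{p_m}[i_m]e_{p_{m+1}}[i_{m+1}]\cdots$ in which each index drops by exactly $1$ and the colours are weakly increasing within the block, while between blocks the index drops by at least $2$. I would record each admissible monomial by the multiset of pairs (starting index, colour) of the runs, together with their lengths — or, more efficiently, directly by how many times each ``elementary creation operator'' of each colour is used after one collapses the semiinfinite tail. The claim is that this data is in bijection with an arbitrary monomial $\prod_p \prod_{j<0} f_p[j]^{a_{p,j}}$ (finitely many $a_{p,j}\ne 0$), and that under this bijection the statistic $d_1$ — which is $i_1+\cdots+i_s - \frac12(h-f_1,h)$ — transforms exactly into $\deg_1$ up to the additive shift $-\frac12(h-f_1,h)$, so that summing $q^{d_1}z^{d_2}$ over admissible monomials with $d_2$ in a fixed coset gives $q^{-\frac12(h-f_1,h)}/(q^{-1})_\infty^k$, whence the total equals the right side of~\eqref{park-chV}.

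To make the bijection precise I would argue by ``peeling off'' runs from the right: given the longest possible tail convention, an admissible monomial not equal to a bare vacuum either has a last run that is nontrivial (index strictly less than its predecessor plus $1$, or a colour decrease forcing a new run) — removing that run, or decrementing within it, and tracking which negative mode $f_p[j]$ it corresponds to via the rule $v_{h'}e_p[j] \leftrightarrow$ creation in family $p$ at level determined by $j+(h',f_p)$. This is essentially the semiinfinite-wedge picture sketched in the Remark after Definition~\ref{park-usl}, extended to $k$ colours: an admissible monomial is a $k$-tuple of partitions (one per colour, describing the deviation of that colour's run-pattern from the ``densely packed'' ground state), and the character of $k$-tuples of partitions weighted by size is exactly $1/(q^{-1})_\infty^k$.

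The main obstacle is verifying that conditions~(2) and~(3) together carve out exactly this $k$-tuple-of-partitions set with \emph{no overcounting and no gaps} — in particular that the tie-breaking rule in~(3) (weak increase of colours when the index drops by $1$) is exactly what is needed to make the correspondence between run-data and colour-indexed partitions a genuine bijection rather than a surjection, and that the index shift $-(h,f_{p_1})$ in~(1), combined with the $\frac12(h-f_1,h)$ correction in the definitions of $d_1$ and $\deg_1$, lines up the gradings on the nose across all the identified representatives. Once the bijection and the degree bookkeeping are pinned down, the equality of characters is immediate.
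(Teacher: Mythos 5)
Your proposal follows essentially the same route as the paper: fix the coset of the $H$-weight, pick the canonical representative, and biject admissible monomials with $k$-coloured partitions so that the $q$-weight contributed by each coset is $q^{-\frac12(h-f_1,h)}/(q^{-1})_\infty^k$. The ``main obstacle'' you flag is dispatched in the paper in one line by the explicit substitution $j_m:=i_m+(h,f_1)-m+1$, which turns conditions (2) and (3) verbatim into ``weakly decreasing sequence of negative integers whose colours weakly increase along equal parts,'' i.e.\ exactly a $k$-coloured partition, with $d_1$ matching $\deg_1$ after the stated shift.
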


\begin{proof}
Consider an element $m_{h,\mathbf{p},\mathbf{i}}$ with $d_2(m_{h,\mathbf{p},\mathbf{i}})=d$. By the definition, this means that it corresponds to the admissible monomial
 $$
v_h e_{p_1}[i_1]\ldots e_{p_s}[i_s]
 $$
with $h+f_{p_1}+\ldots+f_{p_s}=d$; to choose one monomial from the equivalence class, we assume that~$i_1<-(h,f_{p_1})$.
Consider also the corresponding 
``vacuum'' monomial
 $$
v_h e_{p_1}[(-h,f_1)]e_{p_2}[-(h,f_1)+1]\ldots e_{p_s}[-(h,f_1)+s-1]=v_{d,\varnothing,\varnothing}.
 $$
Note that
 $$
d_1(m_{h,\mathbf{p},\mathbf{i}})-d_1(m_{d,\varnothing,\varnothing})=(i_1+(h,f_1))+\ldots+(i_s+(h,f_1)-s+1).
 $$
The admissible monomial conditions mean that the sequence 
 $$
(i_1+(h,f_1)),\ldots,(i_s+(h,f_1)-s+1)
 $$
is a decreasing sequence of negative integers.  Such sequences can be identified with partitions into negative parts. In addition, the parts of our partitions are coloured (the $p$-data), so the corresponding generating function taking into account all the contributions is $\frac{1}{(q^{-1})_\infty^k}$. It remains to take into account the shift and multiply that by the value of the statistics $d_1$ on the vacuum, which is equal to $-\frac12(h-f_1,h)$. Summing up, we get precisely the formula~\eqref{park-chV} for the character of $\frV$.
\end{proof}

\begin{lemma}
Admissible monomials form a basis of~$\frM$.
\end{lemma}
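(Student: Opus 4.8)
The plan is to deduce the statement from the two character computations just established, by exhibiting a natural degree-preserving surjection from $\frM_{\infty/2}$ onto $\frV$ and observing that a surjection between graded vector spaces with equal, finite-dimensional graded components must be an isomorphism. Concretely, what has to be shown is that the admissible monomials $e_{h,\mathbf{p},\mathbf{i}}$ are linearly independent in $\frV$ (they already span $\frV$ by Lemma~\ref{park-span}); this is the same as saying that the linear map $\pi\colon\frM_{\infty/2}\to\frV$ defined on basis vectors by $\pi(m_{h,\mathbf{p},\mathbf{i}})=e_{h,\mathbf{p},\mathbf{i}}$ is an isomorphism, identifying $\frM_{\infty/2}$ with $\frV$.

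First I would check that $\pi$ respects the $\mathbb{Z}\times H$-gradings, i.e.\ that $\deg_1(e_{h,\mathbf{p},\mathbf{i}})=d_1(m_{h,\mathbf{p},\mathbf{i}})$ and $\deg_2(e_{h,\mathbf{p},\mathbf{i}})=d_2(m_{h,\mathbf{p},\mathbf{i}})$; both sides are well defined on equivalence classes of admissible monomials, as already noted. The $\deg_2$ (colour) part is immediate from $v_hU_p[i]\in\calF_{h+f_p}$. For $\deg_1$ one uses that $U_p[i]$ acts on $\calF_h$ as $V_{h,f_p}[i+(h,f_p)]$, a homogeneous operator of degree $i+(h,f_p)$, so that applying $e_{p_1}[i_1],\ldots,e_{p_s}[i_s]$ in turn to $v_h$ raises the Fock-space degree by $\sum_{m=1}^s\bigl(i_m+(h+f_{p_1}+\cdots+f_{p_{m-1}},\,f_{p_m})\bigr)$; expanding with $(f_p,f_q)=-1$ and using the elementary identities $(f_1,\,f_{p_1}+\cdots+f_{p_s})=-s$ and $(g,g)=-s+2\sum_{j<m}(f_{p_j},f_{p_m})$ with $g=f_{p_1}+\cdots+f_{p_s}$, this collapses, after subtracting $\frac12(d_2-f_1,d_2)$ as prescribed by the definition of $\deg_1$, to exactly $i_1+\cdots+i_s-\frac12(h-f_1,h)=d_1(m_{h,\mathbf{p},\mathbf{i}})$. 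Thus $\pi$ is a morphism of $\mathbb{Z}\times H$-graded vector spaces.

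Now the argument concludes formally. By Lemma~\ref{park-span}, $\pi$ is surjective. By the previous lemma, $\ch\frM_{\infty/2}(q,z)=\ch\frV(q,z)$, and by formula~(\ref{park-chV}) every coefficient of this power series is finite: for fixed $h$ the exponents of $q$ are bounded above, so each homogeneous component $(\frV)_{j,h}$, and likewise $(\frM_{\infty/2})_{j,h}$, is finite-dimensional, of the same dimension. A surjective linear map between finite-dimensional spaces of equal dimension is bijective; applying this to each graded component shows $\pi$ is an isomorphism, hence the admissible monomials form a basis of $\frV$. The one point requiring care is the grading compatibility of $\pi$ in the previous paragraph --- this is where the unusual shift $-\frac12(h-f_1,h)$ built into the definitions of $\deg_1$ and $d_1$ is used --- since the equality of \emph{total} dimensions is vacuous here (both spaces are infinite-dimensional) and one genuinely needs the componentwise equality together with finiteness of the components.
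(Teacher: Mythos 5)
Your argument is exactly the paper's: the surjection $\frM_{\infty/2}\twoheadrightarrow\frV$ from Lemma~\ref{park-span} together with the equality of characters forces an isomorphism on each (finite-dimensional) graded component. The extra verification that $\pi$ respects the $\mathbb{Z}\times H$-grading is correct and is a point the paper leaves implicit.
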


\begin{proof}
We already know that admissible monomials span~$\frV$, thus there exists a surjection $\frM_{\infty/2}\twoheadrightarrow\frV$, $m_{h,\mathbf{p},\mathbf{i}}\mapsto e_{h,\mathbf{p},\mathbf{i}}$. On the other hand, we know that the characters of~$\frM_{\infty/2}$ and~$\frV$ coincide, so this surjection has to be an isomorphism, and our monomials are linearly independent (and hence form a basis).  
\end{proof}

\begin{theorem}\label{ParkingTh}
Monomials $ve_{p_1}[i_1]e_{p_2}[i_2]\ldots e_{p_n}[i_n]\in\frQ_{n}(k)$ satisfying
\begin{itemize}
 \item[(i)] $i_1=0$ and $i_s\ge0$ for all $s$;
 \item[(ii)] $i_{s+1}\le i_s+1$  for $s=1,\ldots,n-1$; 
 \item[(iii)] if $i_{s+1}=i_s+1$, then $p_s\le p_{s+1}$;
\end{itemize}
form a basis of~$\frQ_{n}(k)$.
\end{theorem}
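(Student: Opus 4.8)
The plan is to show that the set $S$ of monomials satisfying (i)--(iii) both spans $\frQ_n(k)$ and is linearly independent, whence it is a basis. The spanning half is a straightening argument copied from the proof of Lemma~\ref{park-span}; the independence half uses the explicit model $\frV$ together with the fact, established above, that the admissible monomials of Definition~\ref{park-usl} form a basis of $\frV$.

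\emph{Spanning.} By construction $\frQ_n(k)$ is spanned by the monomials $ve_{p_1}[i_1]\ldots e_{p_n}[i_n]$ with all $i_s\ge0$; those with $i_1>0$ are zero because $ve_p[i]=0$ for $i>0$ in $\frN$, so we may assume $i_1=0$. To reach (ii) and (iii) I would run the rewriting of Lemma~\ref{park-span} verbatim (induction on $n$, then on $i_n$, then downward induction on the colour $p_n$, using the two displayed identities there to lower the offending parameter), with two extra observations. First, the procedure stays inside $\frQ_n(k)$: the relations rewrite a two-letter subword as a combination of two-letter subwords, so the number of factors remains $n$, and the new indices produced are $i_{n-1}+1\ge1$ and $i_n-1\ge i_{n-1}+1\ge1$ in the first case, $i_n\ge1$ and $i_n-1=i_{n-1}\ge0$ in the second, so all indices stay $\ge0$. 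Second, any monomial arising along the way whose first factor has index $>0$ is zero in $\frN$ and may be dropped. This shows $S$ spans $\frQ_n(k)$, and in particular $\dim\frQ_n(k)\le\#S$.

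\emph{Independence.} Since $v_0e_p[i]=V_{0,f_p}[i](v_0)=0$ for $i>0$ (for degree reasons, as $\calF_{f_p}$ is concentrated in degrees $\le0$), the universal property of the induced module gives a homomorphism of right $\frB$-modules $\varphi\colon\frN\to\frV$ with $\varphi(v)=v_0$, so that $\varphi\bigl(ve_{p_1}[i_1]\ldots e_{p_n}[i_n]\bigr)=v_0e_{p_1}[i_1]\ldots e_{p_n}[i_n]=e_{0,\mathbf{p},\mathbf{i}}$. For $m\in S$ the three conditions of Definition~\ref{park-usl} hold with $h=0$ (condition (1) reads $i_1=0\le0=-(0,f_{p_1})$), so $\varphi(m)$ is one of the admissible monomials forming a basis of $\frV$. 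It therefore suffices to check that $m\mapsto\varphi(m)$ is injective on $S$, admissible monomials being understood modulo the equivalence of Definition~\ref{park-usl}; then $\{\varphi(m):m\in S\}$ is a linearly independent subset of $\frV$, hence $S$ is linearly independent in $\frN$ and a fortiori in $\frQ_n(k)$, so $\dim\frQ_n(k)\ge\#S$.

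\emph{The reconstruction.} This is the one delicate point. Apply to $e_{0,\mathbf{p},\mathbf{i}}$ the equivalence move $v_he_p[-(h,f_p)]\ldots=v_{h+f_p}\ldots$ as long as possible. Because $(f_p,f_q)=-1$, one has $-(h,f_p)=\sum_q a_q$ when $h=\sum_q a_qf_q$, so at the $(j{+}1)$-st step the move applies exactly when $i_{j+1}=j$; thus it strips off the maximal prefix $e_{p_1}[0]e_{p_2}[1]\ldots e_{p_r}[r-1]$, whose colours are non-decreasing, $p_1\le p_2\le\cdots\le p_r$, by (iii). From the resulting reduced representative one recovers $r$ and the multiset $\{p_1,\dots,p_r\}$ from the index $f_{p_1}+\cdots+f_{p_r}$ of its vacuum vector; since these colours are sorted, the multiset determines the prefix, and $n$ equals $r$ plus the number of remaining factors, so $m$ is recovered from its class. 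Hence $\varphi$ is injective on $S$, and together with the spanning statement this proves the theorem. I expect this reconstruction to be the crux of the argument: the point is that hypothesis (iii) is precisely what makes the stripped prefix canonical, so that no information is lost on passing to the reduced admissible monomial — without (iii), $\varphi$ would fail to be injective on the larger set of monomials satisfying only (i) and (ii). The straightening argument, by contrast, is a routine variant of Lemma~\ref{park-span}.
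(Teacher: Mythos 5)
Your proof is correct and follows essentially the same route as the paper: spanning via the straightening of Lemma~\ref{park-span}, and independence by mapping $\frN\to\frV$ and landing in the basis of admissible monomials of $\frV$. The ``reconstruction'' step you single out (injectivity of $m\mapsto\varphi(m)$ on $S$ modulo the equivalence of Definition~\ref{park-usl}) is precisely the point the paper's own proof leaves implicit, and your verification of it is right.
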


\begin{proof}
Let us prove that these monomials are linearly independent in~$\frN$. The space~$\frN$ is a universal representation of~$\frB$ with a vector annihilated by all~$e_p[i]$ with~$i>0$, so there exists a surjection $\frM\twoheadrightarrow\frV_0$. The images of our vectors under this surjection are admissible monomials, and thus they are linearly independent. They also span $\frQ_n(k)$, which can be proved using exactly the same reasoning as in Lemma~\ref{park-span}. The theorem is proved. 
\end{proof}

\subsection{Parking functions}

Combinatorial results used in this section (the bijection between parking functions and ``labelled Dyck paths'') can be found in many places; one of the best expositions of relevant properties of parking functions (and their generalisations that we discuss later) can be found in~\cite{Loehr}.

\begin{definition}
A \emph{parking function of length~$n$} is a mapping $$f\colon\{1,\ldots,n\}\to\{1,\ldots,n\}$$ such that for each $k=1,\ldots,n$
 $$
\# f^{-1}(\{1,\ldots,k\})\ge k.
 $$
\end{definition}

Let us encode parking functions using some combinatorial data. Given a parking function~$f$, we rearrange arguments according to their values, and for arguments with the same value~--- in increasing order. Thus we obtain a permutation $\sigma_f$ of~$[n]$. Also, to any parking function we assign the sequence $\{b_k(f)\}_{k=1}^{n}$, where $b_k(f)=\# f^{-1}(\{1,\ldots,k\})$. 

The following lemma is straightforward.

\begin{lemma}
The above correspondence is a bijection between the set of all parking functions of length $n$ and the set of pairs $(\sigma,b)$, where $\sigma$ is a permutation of~$[n]$, and $b$ is a sequence $b_1$, \ldots, $b_n$ of nonnegative integers which satisfy the following conditions:
\begin{itemize}
 \item[(i)] $b_1\le b_2\le \ldots\le b_{n-1}\le b_n=n$;
 \item[(ii)] $b_k\ge k$  for $k=1,\ldots,n-1$;
 \item[(iii)] if $b_{s+1}=b_s$, then $\sigma(s)<\sigma(s+1)$. 
\end{itemize} 
\end{lemma}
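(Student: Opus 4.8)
The plan is to construct the correspondence explicitly and check that it is a bijection by exhibiting its inverse. Given a parking function $f\colon[n]\to[n]$, I first sort the arguments $1,\ldots,n$ so that $f$ becomes (weakly) increasing, breaking ties among arguments with equal $f$-value by putting them in increasing order of the argument; this recording of the sorting order is precisely the permutation $\sigma_f$, characterised by: $f(\sigma_f(1))\le f(\sigma_f(2))\le\cdots\le f(\sigma_f(n))$, and $f(\sigma_f(s))=f(\sigma_f(s+1))$ implies $\sigma_f(s)<\sigma_f(s+1)$. Then I set $b_k(f)=\#f^{-1}(\{1,\ldots,k\})$ for $k=1,\ldots,n$.

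Next I verify that the pair $(\sigma_f,b(f))$ satisfies conditions (i)--(iii). For (i): the sets $f^{-1}(\{1,\ldots,k\})$ are nested and increasing in $k$, so $b_k\le b_{k+1}$; and $b_n=\#f^{-1}(\{1,\ldots,n\})=\#[n]=n$. For (ii): this is exactly the defining inequality of a parking function, $\#f^{-1}(\{1,\ldots,k\})\ge k$. For (iii): if $b_{s+1}=b_s$, then no argument takes a value that would make $f(\sigma_f(s))$ and $f(\sigma_f(s+1))$ straddle a jump; more carefully, $b_{s+1}=b_s$ forces the $s$-th and $(s+1)$-st elements in the sorted order to have equal $f$-value (a jump $b_{s+1}>b_s$ would occur precisely when $f(\sigma_f(s+1))>f(\sigma_f(s))$), and then the tie-breaking convention gives $\sigma_f(s)<\sigma_f(s+1)$.

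For the inverse, given a permutation $\sigma$ and a sequence $b$ satisfying (i)--(iii), I reconstruct $f$ by the rule: for each $s$, let $j_s$ be the number of indices $t\le s$ with $b_t<b_{t+1}$ (with the convention $b_0=0$ counting the initial jump), so that $j_s$ is the common value $f(\sigma(s))$ should take; then define $f(\sigma(s))=j_s$. One checks that the multiset of values $\{j_1,\ldots,j_n\}$ is exactly $\{b_k : $ counted with multiplicity the size of the block where value $k$ occurs$\}$, and that $\#f^{-1}(\{1,\ldots,k\})=b_k\ge k$, so $f$ is a parking function; and that applying the forward map to this $f$ returns $(\sigma,b)$, using (iii) to see that the tie-breaking in the forward map agrees with the given $\sigma$ on each constant block of $b$. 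Conversely, starting from $f$, applying the forward then the inverse map recovers $f$ because the sorted values and the block structure determine $f$ on each $\sigma_f(s)$.

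I do not expect any serious obstacle here — this is the standard encoding of parking functions as labelled Dyck paths, and the paper already flags it as ``straightforward'' with a reference to~\cite{Loehr}. The only point needing a little care is the bookkeeping in condition (iii): one must be careful that ``$b_{s+1}=b_s$'' corresponds to ``$\sigma(s)$ and $\sigma(s+1)$ lie in the same constant block'', i.e. get the same $f$-value, and that the tie-breaking convention (increasing order of arguments within a block) matches the inequality $\sigma(s)<\sigma(s+1)$ rather than its reverse. Once the conventions are pinned down, injectivity and surjectivity of the correspondence are immediate from the explicit inverse.
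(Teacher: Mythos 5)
The paper offers no argument for this lemma (it is simply declared straightforward), so there is no proof to compare against; judged on its own, your verification of condition (iii) contains a genuine error, and it is not a cosmetic one. You assert that a jump $b_{s+1}>b_s$ occurs precisely when $f(\sigma_f(s+1))>f(\sigma_f(s))$. This conflates two different roles of the index $s$: the sequence $b$ is indexed by \emph{values} ($b_k$ counts arguments with value at most $k$), whereas $\sigma_f(s)$ is indexed by \emph{positions} in the sorted order. The correct criterion is that $f(\sigma_f(s))<f(\sigma_f(s+1))$ if and only if $s$ is a block boundary, i.e. $s=b_k$ for some $k$; this is not equivalent to $b_{s+1}>b_s$, which merely says that some argument takes the value $s+1$. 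Concretely, take $n=3$ and $f(1)=1$, $f(2)=2$, $f(3)=1$. Then $b=(2,3,3)$ and the sorted order of arguments is $1,3,2$, so $\sigma_f(2)=3>2=\sigma_f(3)$ even though $b_3=b_2$: condition (iii) fails. Thus not only does your argument break down at this step, but the lemma as literally printed is false for the correspondence described just above it; your proof inherits the indexing confusion rather than detecting it. The same confusion infects your inverse construction: the value you assign to $\sigma(s)$ is computed from jumps of $b$ at indices near $s$, whereas it should be the index $k$ of the block $\{b_{k-1}+1,\ldots,b_k\}$ containing the position $s$ (already for $b=(2,4,4,4)$ your formula for $j_s$ does not return the sorted value sequence $1,1,2,2$).

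Both the statement and your proof are repaired by replacing (iii) with: if $f(\sigma(s))=f(\sigma(s+1))$, equivalently if $s\notin\{b_1,\ldots,b_n\}$, then $\sigma(s)<\sigma(s+1)$. With that condition your forward map and the inverse (assign value $k$ to the arguments $\sigma(b_{k-1}+1),\ldots,\sigma(b_k)$, with $b_0=0$) go through exactly as you intend. It is worth recording that the set of pairs cut out by the printed condition still has the correct total cardinality --- for $n=4$ the path $b=(3,3,4,4)$ admits $6$ pairs under the printed (iii) but only $4$ parking functions, while $b=(2,4,4,4)$ admits $4$ pairs but $6$ parking functions, and such discrepancies cancel in the sum --- so the enumerative consequences (Corollary~\ref{dyck} as a counting statement, and part (1) of Theorem~\ref{ParkingCor1}) survive. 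But an explicit, value-preserving bijection is exactly what is needed for the $S_n$-equivariance argument in part (2) of Theorem~\ref{ParkingCor1}, so the indexing must genuinely be corrected rather than counted around.
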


\begin{corollary}\label{dyck}
The set of all parking functions of length~$n$ is in one-to-one correspondence with pairs $(\sigma,a)$, where $\sigma$ is a permutation of~$[n]$, and $a$ is a sequence $a_1$, \ldots, $a_n$ of nonnegative integers which satisfy the following conditions:
\begin{itemize}
 \item[(i)] $a_1=0$;
 \item[(ii)] $a_{s+1}\le a_s+1$  for $s=1,\ldots,n-1$;
 \item[(iii)] if $a_{s+1}=a_s+1$, then $\sigma(s)<\sigma(s+1)$. 
\end{itemize}
\end{corollary}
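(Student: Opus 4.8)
The plan is to deduce the statement from the previous lemma by an explicit change of variables; no new combinatorial input is needed beyond what that lemma provides. It already puts parking functions of length~$n$ in bijection with pairs $(\sigma,b)$ subject to its conditions (i)--(iii), so all that remains is to transport this along a bijection between the set of admissible $b$-sequences and the set of admissible $a$-sequences appearing in the corollary. The key observation is that, since $b_1$ may be any value in $\{1,\ldots,n\}$ whereas $a_1$ is forced to equal~$0$, the sequence $a$ cannot be recovered from $b$ entry by entry; instead one should reverse the sequence, exploiting the fact that it is the \emph{last} entry $b_n=n$ that is pinned down.

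Concretely, to a pair $(\sigma,b)$ satisfying the lemma's conditions I would associate the pair $(\sigma',a)$ given by
$$a_s:=b_{n+1-s}-(n+1-s)\quad(s=1,\ldots,n),\qquad \sigma'(s):=n+1-\sigma(n+1-s).$$
Then I would verify the three conditions of the corollary for $(\sigma',a)$: condition~(i) is immediate because $b_n=n$; for~(ii) one computes $a_{s+1}-a_s=1-(b_{n+1-s}-b_{n-s})\le1$, using that $b$ is nondecreasing, together with $a_s\ge0$, which follows from $b_{n+1-s}\ge n+1-s$; and for~(iii) one notes that $a_{s+1}=a_s+1$ holds exactly when $b_{n-s}=b_{n+1-s}$, in which case the lemma's condition~(iii) at index $n-s$ gives $\sigma(n-s)<\sigma(n-s+1)$, and applying the order-reversing involution $t\mapsto n+1-t$ to both positions and values turns this inequality into $\sigma'(s)<\sigma'(s+1)$.

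Next I would write down the inverse assignment, $b_t=a_{n+1-t}+(n+1-t)$ and $\sigma(t)=n+1-\sigma'(n+1-t)$, and check by the same bookkeeping that it carries any $(\sigma',a)$ satisfying the corollary's conditions back to a pair $(\sigma,b)$ satisfying the lemma's conditions. This exhibits a bijection between the two conditioned sets; composing it with the bijection of the lemma between parking functions and $(\sigma,b)$-pairs yields the desired one-to-one correspondence.

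The only step that requires any thought is guessing the substitution, and once one realises that a reversal has to be involved (so that the pinned value $b_n=n$, rather than the unconstrained $b_1$, anchors the new sequence at $a_1=0$), nothing but routine verification remains; I do not anticipate a genuine obstacle. As a reality check it is worth noting that conditions (i)--(iii) of the corollary are precisely the standard description of the area sequence of a Dyck path of semilength~$n$, together with a labelling of its north steps that increases along each maximal run of consecutive north steps --- that is, of a labelled Dyck path in the sense of~\cite{Loehr} --- so the corollary is just the labelled-Dyck-path incarnation of the parking-function bijection, and one could equally well prove it directly in that language.
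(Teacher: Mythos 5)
Your proposal is correct and is essentially the paper's own proof: the paper's one-line bijection $a_s:=b_{1+n-s}-n-1+s$ is exactly your substitution $a_s=b_{n+1-s}-(n+1-s)$. You additionally spell out the accompanying relabelling $\sigma'(s)=n+1-\sigma(n+1-s)$ needed to make condition~(iii) match after the reversal, which the paper leaves implicit.
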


\begin{proof}
A bijection between the $b$-data and the $a$-data is established by the mapping $a_s:=b_{1+n-s}-n-1+s$.
\end{proof}

\begin{remark} 
If we draw the graph of the piecewise constant function defined by the sequence $b$, and decorate this graph using the permutation~$\sigma$, we get a nice pictorial way to represent parking functions. Lattice paths from $(0,0)$ to $(n,n)$ that never go below the diagonal are called \emph{Dyck paths}, so it is more than natural that our type of data is called \emph{labelled Dyck paths}. For example, the parking function $f$ with $f(1)=2$, $f(2)=1$, $f(3)=7$, $f(4)=1$, $f(5)=1$, $f(6)=4$, $f(7)=2$ corresponds to the following picture: 
 $$
\includegraphics[scale=0.8]{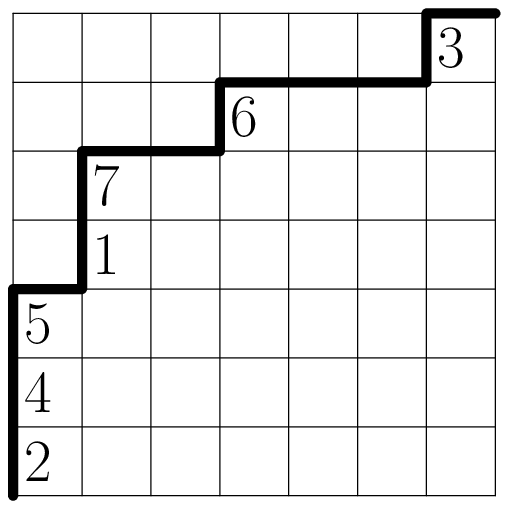}
 $$
\end{remark}

Note that the parking functions condition is a restriction on sizes of preimages, so it is stable under the action of the group~$S_n$ via permutations of the set of arguments. Recall that the parking functions module~$\PF(n)$ is the set-theoretic representation of~$S_n$ arising from the action of the symmetric group on parking functions. For the definition of the Frobenius character $\mathcal{F}_{\PF(n)}$ (used in Theorem~\ref{ParkingCor2}), we refer the reader to~\cite{Mac}, which is the best possible reference on symmetric functions.

\subsection{Proofs}

\emph{Proof of Theorem~\ref{ParkingCor1}.}
\begin{enumerate}
 \item  It is well known (see, for example, \cite{Sta}) that the number of parking functions of length~$[n]$ is equal to $(n+1)^{n-1}$. From Corollary~\ref{dyck} we see that the basis elements of $\frQ_{n}^{multi}(n)$ are in one-to-one correspondence with parking functions, which completes the proof. \qed
 \item Consider the following partial order on the set of admissible monomials that form a basis of $\frQ_{n}^{multi}(n)$: we put 
 $$
e_{0,\mathbf{p},\mathbf{i}}<e_{0,\mathbf{p}',\mathbf{i}'}
 $$
if $\mathbf{i}<\mathbf{i}'$ lexicographically. Take an admissible monomial $e=e_{0,\mathbf{p},\mathbf{i}}$ (which corresponds to a certain parking function~$f$), and some permutation~$\sigma$. The monomial $\sigma(e)$ is either admissible, or can be expressed as a combination of admissible monomials, by applying the procedure that allowed us to prove Lemma~\ref{park-span}. In both cases, it is easy to see that $\sigma(e)$ is equal to the monomial $e'$ corresponding to the parking function $\sigma(f)$ plus a combination of monomials which are larger than $e'$ (and hence larger than~$e$, since the monomials corresponding to $f$ and $\sigma(f)$ the same $\mathbf{i}$-data) . It follows that the matrices of permutations have the same diagonal elements when written in our basis of $\frQ_{n}^{multi}(n)$ and in the natural basis of the parking functions module, hence these two representations have the same character and are isomorphic. \qed
\end{enumerate}

\emph{Proof of Theorem~\ref{ParkingCor2}.} Assume that the representation $V$ of the symmetric group $S_n$ arises from the action on a finite set~$M$ (in the same way as the parking functions module arises from the action on parking functions). Note that if we expand the Frobenius character of~$V$ in the base of monomial symmetric functions, the coefficient of the monomial symmetric function $m_\mu$ is equal to the number of $S_\mu$-orbits in~$M$, where $S_\mu=S_{\mu_1}\times S_{\mu_2}\times\dots\times S_{\mu_k}$ is the corresponding Young subgroup. Indeed, since the base of monomial symmetric functions is dual to the base of the complete symmetric functions, that coefficient is equal to 
\begin{multline*}
\langle h_\mu,\mathcal{F}_V\rangle=\dim\Hom_{S_n}(\Ind_{S_\mu}^{S_n}\mathbbold{1},V)=\\=\dim\Hom_{S_\mu}(\mathbbold{1},\Res_{S_n}^{S_\mu}V)=\dim(V^{S_\mu})=\# M/S_\mu
\end{multline*}
(here $\mathbbold{1}$ denotes the trivial representation of the Young subgroup, and $\langle\cdot,\cdot\rangle$ is the standard inner product on the ring of symmetric functions). The bijection between parking functions and labelled Dyck paths described above can be easily extended to orbits of $S_\mu$. An orbit of $S_\mu$ in the set of parking functions is completely determined by the preference set of the first $\mu_1$ cars, the preference set of the next $\mu_2$ cars etc., so we should just replace the $\sigma$-data define above by the $\widehat{\sigma}$-data, where $\widehat{\sigma}$ is now an ordered multiset, not a permutation: instead of each number, we write down the number of the ``cluster'' containing this number (that is, $1$ for numbers from $1$ to $\mu_1$, $2$ for numbers from $\mu_1+1$ to $\mu_1+\mu_2$ etc.). The data thus obtained is exactly the data that describes admissible monomials.\qed

\section*{Appendix: remarks and open problems}

\subsection*{Catalan numbers}

If we restrict ourselves to some obvious subalgebras of our algebras, we get vector spaces whose dimesions are Catalan numbers. Namely, denote by~$\frA$ the associative algebra with generators $e[i]$ ($i\in\mathbb{Z}$) and relations $$\{e[i],e[j]\}=\{e[i+1],e[j-1]\}$$ for all $i,j$. Consider the right $\frA$-module $\frM$ induced from the trivial representation of the subalgebra $\frA_+$ generated by $e[i]$ for all $i>0$, and denote by~$\frA_{\ge0}$ the linear span $\mathbb{C}\{ e[i]\mid i\ge0\}$. Let~$\frP_{n}$ be the image of the mapping from the $n^\text{th}$ tensor power~$\frA_{\ge0}^{\otimes n}$ to~$\frM$ that maps every monomial~$e[i_1]\otimes e[i_2]\otimes\ldots\otimes e[i_n]$ to~$ve[i_1]e[i_2]\ldots e[i_n]$. 

\begin{theorem}\label{CatalanCor}
The dimension of the space $\frP_{n}$ is equal to the $n^\text{th}$ Catalan number $c_n=\frac{1}{n+1}\binom{2n}{n}$.
\end{theorem}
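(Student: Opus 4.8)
The plan is to observe that $\frP_n$ is nothing but the single-colour specialisation of the construction behind Theorem~\ref{ParkingTh}: the algebra $\frA$ is $\frB(1)$, the module $\frM$ is $\frN$ for $k=1$, and $\frP_n=\frQ_n(1)$. Consequently the whole machinery developed in Section~\ref{parking} applies verbatim with $k=1$, and in particular Theorem~\ref{ParkingTh} furnishes an explicit basis of $\frP_n$. So the first step is simply to unwind these identifications and reduce the claim to a count of the basis elements provided by that theorem.

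With $k=1$ there is only one colour, so $p_s=1$ for all $s$ and condition~(iii) of Theorem~\ref{ParkingTh} (``if $i_{s+1}=i_s+1$ then $p_s\le p_{s+1}$'') is automatically satisfied. Hence a basis of $\frP_n$ is indexed exactly by the integer sequences $(i_1,\ldots,i_n)$ subject to $i_1=0$, $i_s\ge 0$ for all $s$, and $i_{s+1}\le i_s+1$ for $s=1,\ldots,n-1$. The second step is to check that nothing in the reduction is lost: the spanning statement is the $k=1$ instance of the argument in Lemma~\ref{park-span} (and of the spanning remark in the proof of Theorem~\ref{ParkingTh}), and linear independence is inherited from the admissible-monomial basis of $\frV$, again specialised to rank one.

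The final step is combinatorial: the sequences $(i_1,\ldots,i_n)$ just described are precisely the $a$-data appearing in Corollary~\ref{dyck}, with the permutation $\sigma$ forgotten. These are in bijection with Dyck paths from $(0,0)$ to $(n,n)$ that never go below the diagonal, and the number of such paths is the Catalan number $c_n=\frac{1}{n+1}\binom{2n}{n}$; this is the classical fact recalled around Corollary~\ref{dyck}. Putting the three steps together yields $\dim\frP_n=c_n$.

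I do not expect a genuine obstacle here, since all the real work — constructing the vertex-operator representation, proving the monomial basis spans, and computing the relevant characters — has already been carried out in order to prove Theorem~\ref{ParkingTh}; the only things to be careful about are the bookkeeping of the $k=1$ specialisation and a short, standard verification of the Catalan count. If one wanted a self-contained estimate one could alternatively produce a bijection between these sequences and balanced sequences of $+1$'s and $-1$'s directly, but invoking Corollary~\ref{dyck} is cleaner and keeps the proof in line with the treatment of parking functions.
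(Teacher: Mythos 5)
Your proof is correct, and it rests on the same machinery as the paper's, but it enters through a different door. The paper deduces Theorem~\ref{CatalanCor} from the character formula of Theorem~\ref{ParkingCor2}: specialising to one variable kills every monomial symmetric function $m_\mu$ with $\mu\ne(n)$, and the coefficient of $m_{(n)}$ in $\mathcal{F}_{\PF(n)}$ is the number of $S_n$-orbits on parking functions, which is $c_n$. You instead identify $\frA$ with $\frB(1)$ and $\frP_n$ with $\frQ_n(1)$, apply Theorem~\ref{ParkingTh} directly (condition~(iii) being vacuous for a single colour), and count the resulting sequences $(i_1,\ldots,i_n)$ with $i_1=0$, $i_s\ge0$, $i_{s+1}\le i_s+1$ as Dyck paths. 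The two routes converge on the same combinatorial object: an $S_n$-orbit of parking functions is exactly an unlabelled sequence of $a$-data in Corollary~\ref{dyck}, i.e.\ one of your sequences. Your version is marginally more self-contained in that it does not need the Frobenius-character apparatus (only the basis theorem), while the paper's version comes for free once Theorem~\ref{ParkingCor2} is in place and generalises immediately to arbitrary Young subgroups $S_\mu$. Both are complete; there is no gap in your argument.
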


\begin{proof}
Note that if we restrict ourselves to generators of the first colour, we are dealing with the algebra~$\frA$. The dimension formula follows from the character formula from theorem \ref{ParkingCor2} in the case of the partition~$\mu=(n)$; the coefficient of $m_{n}$, or, in other words, the number of $S_n$-orbits on the set of parking functions, is equal to the $n^\text{th}$ Catalan number.
\end{proof}

\subsection*{Fuss--Catalan numbers and ``higher parking functions''}

Our results admit some rather straightforward generalisations. We formulate them here; the proofs are left to the reader (basically, the main difference in this case is that we put all values of bilinear form on the basis to be equal to the negative of some fixed integer~$m\ge1$; the case $m=1$ was considered in detail in this paper). 
Denote by~$\frB^{(m)}(k)$ the associative algebra with generators $e_p[i]$ ($p=1$, \ldots, $k$, $i\in\mathbb{Z}$) and relations 
 $$
\sum_{i=0}^m (-1)^i\binom{m}{i} [e_p[r-i],e_q[s+i]]_m=0,
 $$
for all $r,s$, $p,q$. 

Consider the right $\frB^{(m)}(k)$-module $\frN^{(m)}$ induced from the trivial representation of the subalgebra $\frB^{(m)}_+(k)$ generated by $e_p[i]$ for all $p=1,\ldots,k$ and all $i>0$. Let~$\frB^{(m)}_{\ge0}(k)=\mathbb{C}\{ e_p[i]\mid i\ge0\}$. Let~$\frQ^{(m)}_{n}(k)$ be the image of the mapping from the $n^\text{th}$ tensor power~$(\frB^{(m)}_{\ge0}(k))^{\otimes n}$ to~$\frN^{(m)}$ that maps the monomial~$e_{p_1}[i_1]\otimes e_{p_2}[i_2]\otimes\ldots\otimes e_{p_n}[i_n]$ to~$ve_{p_1}[i_1]e_{p_2}[i_2]\ldots e_{p_n}[i_n]$. 

\begin{theorem}\label{HigherParkingTh}
Monomials $ve_{p_1}[i_1]e_{p_2}[i_2]\ldots e_{p_n}[i_n]$ satisfying
\begin{itemize}
 \item[(i)] $i_1=0$;
 \item[(ii)] $i_{s+1}\le i_s+m$  for $s=1,\ldots,n-1$;
 \item[(iii)] if $i_{s+1}=i_s+m$, then $p_s\le p_{s+1}$, 
\end{itemize}
form a basis of the vector space~$\frQ^{(m)}_{n}(k)$.
\end{theorem}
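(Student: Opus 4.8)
The plan is to follow the exact strategy used for the $m=1$ case in Sections~\ref{technique} and~\ref{parking}, since Theorem~\ref{HigherParkingTh} is the straightforward generalisation in which the bilinear form is rescaled by~$m$. First I would set up the lattice $B^{(m)}=\bigoplus_{p=1}^k\mathbb{Z}f_p$ with $(f_p,f_q)=-m$ for all $p,q$, and form the vector space $\frV^{(m)}=\bigoplus_{h\in B^{(m)}}\calF_h$ with its operators $U_p[i]$ as in the Definition preceding Proposition~\ref{simp-rel}. By Proposition~\ref{simp-rel} with $N=(f_p,f_q)=-m$, the operators $U_p[i]$ satisfy
\begin{equation*}
\nu_0 U_p[i]U_q[j+m]+\nu_1 U_p[i+1]U_q[j+m-1]+\cdots=\nu_0 U_q[j]U_p[i+m]+\nu_1 U_q[j+1]U_p[i+m-1]+\cdots,
\end{equation*}
where $\sum_l\nu_l t^l=(1-t)^{m}$ is now a \emph{polynomial}, so the sums are finite; expanding $(1-t)^m=\sum_{i=0}^m(-1)^i\binom{m}{i}t^i$ and reindexing, one checks this is precisely the defining relation of $\frB^{(m)}(k)$. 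Hence $\frV^{(m)}$ is a $\frB^{(m)}(k)$-module, and by Proposition~\ref{gen-span} it is spanned by the monomials $v_h e_{p_1}[i_1]\cdots e_{p_s}[i_s]$.

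Next I would define admissible monomials exactly as in Definition~\ref{park-usl}, but with conditions (2) and (3) reading $i_{m+1}\le i_m+m$ and ``if $i_{\ell+1}=i_\ell+m$ then $p_\ell\le p_{\ell+1}$''. The analogue of Lemma~\ref{park-span} goes through: using the $m$-term relation one rewrites any monomial with a violation $i_s\ge i_{s-1}+m+1$, or with $i_s=i_{s-1}+m$ and $p_{s-1}>p_s$, as a combination of monomials that are strictly smaller in the same double induction (on length, then on the last exponent, then on the colour of the last factor). Then one introduces the formal span $\frM^{(m)}_{\infty/2}$ on admissible-monomial symbols, with the statistics $d_1(m_{h,\mathbf p,\mathbf i})=i_1+\cdots+i_s-\tfrac12(h-f_1,h)$ and $d_2=h+f_{p_1}+\cdots+f_{p_s}$, and the $\mathbb{Z}\times B^{(m)}$-grading on $\frV^{(m)}$ defined by $\deg_1(x)=\deg(x)-\tfrac12(h-f_1,h)$ on $\calF_h$. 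The character of $\frV^{(m)}$ is $\tfrac{1}{(q^{-1})_\infty^k}\sum_{h}q^{-\frac12(h-f_1,h)}z^h$ as before (the Fock space is still a polynomial ring in the $f_p[i]$, $i<0$). The combinatorial count matching $\ch\frM^{(m)}_{\infty/2}$ to this: after subtracting off the vacuum monomial with exponents $-(h,f_1),-(h,f_1)+1,\dots$, the admissibility conditions say the shifted sequence $\bigl(i_\ell+(h,f_1)\bigr)-(\ell-1)\cdot 0$ — here one must be slightly careful, since the natural ``staircase'' step is now $m$ rather than $1$, so I would subtract the arithmetic progression with step $m$ — becomes a weakly decreasing sequence of nonpositive integers with colours, giving generating function $\tfrac{1}{(q^{-1})_\infty^k}$ again. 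Hence $\ch\frM^{(m)}_{\infty/2}=\ch\frV^{(m)}$, the surjection $\frM^{(m)}_{\infty/2}\twoheadrightarrow\frV^{(m)}$ is an isomorphism, and admissible monomials form a basis of $\frV^{(m)}$.

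Finally, to descend to $\frQ^{(m)}_n(k)$: the module $\frN^{(m)}$ is the universal $\frB^{(m)}(k)$-module with a cyclic vector killed by all $e_p[i]$, $i>0$, so there is a surjection $\frN^{(m)}\twoheadrightarrow\frV^{(m)}_0$ (onto the component generated by $v_0$). The listed monomials with $i_1=0$ map to admissible monomials over $h=0$ (note $-(0,f_p)=0$, so condition (i) is the right specialisation of admissibility condition (1)), hence are linearly independent in $\frN^{(m)}$, a fortiori in $\frQ^{(m)}_n(k)$; and repeating the rewriting argument of Lemma~\ref{park-span} inside $\frQ^{(m)}_n(k)$ shows they span. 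This yields the claimed basis.

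I expect the only genuine subtlety — the ``main obstacle'' — to be verifying that the expansion of Proposition~\ref{simp-rel}'s relation with $(1-t)^m$ matches, after reindexing $r,s$, the bracket relation $\sum_{i=0}^m(-1)^i\binom{m}{i}[e_p[r-i],e_q[s+i]]_m=0$ as written in the statement (in particular pinning down what the subscript-$m$ bracket $[\cdot,\cdot]_m$ means and checking signs), together with making sure the ``step $m$'' bookkeeping in the character computation is done consistently in both $\frV^{(m)}$ and $\frM^{(m)}_{\infty/2}$; everything else is a verbatim transcription of the $m=1$ arguments.
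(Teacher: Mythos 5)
Your proposal is correct and takes essentially the same route the paper intends: the paper explicitly leaves this proof to the reader, saying only that one replaces the bilinear form by $(f_p,f_q)=-m$ and repeats the $m=1$ argument, and that is precisely what you carry out (your flagged ``main obstacle'' does check out: expanding $(1-t)^m$ in Proposition~\ref{simp-rel} and reindexing reproduces the defining relation of $\frB^{(m)}(k)$, with the sign $(-1)^m$ absorbed into the bracket $[\cdot,\cdot]_m$, and the step-$m$ staircase subtraction again yields coloured partitions with generating function $1/(q^{-1})_\infty^k$). No gaps beyond the level of detail the paper itself supplies.
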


\begin{theorem}\label{HigherParkingCor}
Consider a subspace $(\frQ^{(m)}_{n})^{multi}(n)$ of $\frQ^{(m)}_{n}(n)$ spanned by all monomials containing generators of all colours (so that each colour occurs exactly once). The dimension of this subspace is equal to~$(mn+1)^{n-1}$.
\end{theorem}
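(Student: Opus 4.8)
The plan is to deduce Theorem~\ref{HigherParkingCor} from Theorem~\ref{HigherParkingTh} together with a ``higher'' analogue of the bijection of Corollary~\ref{dyck}, in exact parallel with the way Theorem~\ref{ParkingCor1}(1) was deduced from Theorem~\ref{ParkingTh}. First I would restrict Theorem~\ref{HigherParkingTh} to the multilinear situation $k=n$ and to monomials in which every colour $1,\ldots,n$ occurs exactly once. By that theorem, such basis monomials $v e_{p_1}[i_1]\cdots e_{p_n}[i_n]$ are in bijection with pairs $(\sigma, \mathbf{i})$, where $\sigma=(p_1,\ldots,p_n)$ is a permutation of $[n]$ and $\mathbf{i}=(i_1,\ldots,i_n)$ is a sequence of nonnegative integers with $i_1=0$, $i_{s+1}\le i_s+m$, and the ``tie-breaking'' rule: if $i_{s+1}=i_s+m$ then $p_s\le p_{s+1}$ (equivalently $\sigma(s)<\sigma(s+1)$, since colours are distinct).

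Next I would identify this set of $\mathbf{i}$-sequences, for a fixed underlying ``shape'', with $m$-Dyck paths (lattice paths staying weakly above the line of slope $1/m$), so that the pairs $(\sigma,\mathbf{i})$ are precisely the \emph{$m$-parking functions} (or ``$m$-labelled Dyck paths'') of length $n$. Concretely, I would run the substitution $a_s := i_s$ and set $b_s := \text{(the reversed/shifted sequence)}$ as in the proof of Corollary~\ref{dyck}, with $-1$ replaced by $-m$ throughout: the map $a_s := b_{1+n-s}-mn-1+s\cdot\!$(adjusted) sends the $a$-data satisfying (i)--(iii) of Theorem~\ref{HigherParkingTh} bijectively to $b$-data satisfying $b_1\le\cdots\le b_n=mn$, $b_k\ge mk$, and the corresponding tie-breaking condition. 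These $b$-data decorated by $\sigma$ are exactly the combinatorial objects counted by the Fuss--Catalan ``parking'' numbers.

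Finally I would invoke the known enumeration result that the number of $m$-parking functions of length $n$ equals $(mn+1)^{n-1}$ (a theorem going back to work surveyed in~\cite{Loehr}; for $m=1$ this is the classical $(n+1)^{n-1}$ used in Theorem~\ref{ParkingCor1}(1)). Combining the dimension count from Theorem~\ref{HigherParkingTh} with this bijection yields $\dim (\frQ^{(m)}_{n})^{multi}(n) = (mn+1)^{n-1}$, completing the proof.

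The genuinely new input here, relative to the $m=1$ case already treated in the body of the paper, is entirely combinatorial: one must verify that the tie-breaking condition ``$i_{s+1}=i_s+m \Rightarrow p_s\le p_{s+1}$'' is the \emph{correct} labelling condition so that the pairs $(\sigma,\mathbf{i})$ biject with $m$-parking functions rather than some other decorated family, and that the resulting count is indeed $(mn+1)^{n-1}$. I expect the main obstacle to be bookkeeping in the shift $a_s := b_{1+n-s} - (mn+1) + s$: one has to check that the inequalities and the ordering condition transform correctly under reversal when the ``step'' is $m$ rather than $1$, and that no boundary term is off. Once that is pinned down, the rest is an immediate consequence of Theorem~\ref{HigherParkingTh} and the standard Fuss--Catalan parking-function enumeration, so I would keep this part brief and refer to~\cite{Loehr} for the combinatorial identity.
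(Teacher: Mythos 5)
Your proposal is correct and follows exactly the route the paper intends: the paper leaves the proof of Theorem~\ref{HigherParkingCor} to the reader as the straightforward generalisation of the $m=1$ argument, namely restricting the monomial basis of Theorem~\ref{HigherParkingTh} to the multilinear component, identifying those basis monomials with labelled $m$-Dyck paths ($m$-parking functions) via the $m$-step analogue of Corollary~\ref{dyck}, and invoking the standard enumeration $(mn+1)^{n-1}$. The bookkeeping you flag (the reversal/shift with step $m$ and the tie-breaking condition) works out exactly as in the $m=1$ case, so there is no gap.
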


The corresponding ``higher parking functions'' modules also can be defined as set representations of the symmetric groups. Also, subalgebras corresponding to the rank one sublattices provide generalisations of the Catalan numbers, the so called Fuss--Catalan numbers, as follows.

For $m\ge 1$, consider an associative algebra $\frA^{(m)}$ with generators $e[i]$, $i\in\mathbb{Z}$, and relations 
 $$
\sum_{i=0}^m (-1)^i\binom{m}{i} [e[r-i],e[s+i]]_m=0,
 $$
for all $r,s$, where $[a,b]_m=ab-(-1)^m ba$. Consider the right $\frA^{(m)}$-module $\frM^{(m)}$ induced from the trivial representation of the subalgebra $\frA^{(m)}_+$ generated by $e[i]$ for all $i>0$. Let~$\frA^{(m)}_{\ge0}=\mathbb{C}\{ e[i]\mid i\ge0\}$. Let~$\frP^{(m)}_{n}$ be the image of the mapping from the $n^\text{th}$ tensor power~$(\frA^{(m)}_{\ge0})^{\otimes n}$ to~$\frM^{(m)}$ that maps every monomial~$e[i_1]\otimes e[i_2]\otimes\ldots\otimes e[i_n]$ to~$ve[i_1]e[i_2]\ldots e[i_n]$. 

\begin{theorem}\label{HigherCatalanCor}
The dimension of the space $\frP^{(m)}_{n}$ is equal to the $n^\text{th}$ Fuss--Catalan number $c^{(m)}_n=\frac{1}{mn+1}\binom{(m+1)n}{n}$.
\end{theorem}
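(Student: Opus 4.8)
The plan is to derive Theorem~\ref{HigherCatalanCor} as an immediate corollary of the higher-parking-function bases, exactly as Theorem~\ref{CatalanCor} was derived from Theorem~\ref{ParkingTh}. The single-colour algebra $\frA^{(m)}$ is the $k=1$ specialisation of $\frB^{(m)}(k)$ (with $e[i]$ playing the role of $e_1[i]$), and $\frP^{(m)}_n$ is correspondingly $\frQ^{(m)}_n(1)$. So the work reduces to (a) having a basis of $\frQ^{(m)}_n(k)$ of the shape claimed in Theorem~\ref{HigherParkingTh}, and (b) counting the $k=1$ specialisation of that basis.

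First I would record that, by Theorem~\ref{HigherParkingTh} with $k=1$, the space $\frP^{(m)}_n = \frQ^{(m)}_n(1)$ has a basis indexed by sequences $(i_1,\ldots,i_n)$ of integers with $i_1=0$, with $i_{s+1}\le i_s+m$ for all $s$, and with $i_s\ge 0$ for all $s$ (this last nonnegativity being forced, as in the proof of Theorem~\ref{ParkingTh}, because $v\,e[j]=0$ for $j>0$ in $\frM^{(m)}$, so the image in $\frM^{(m)}$ only sees generators $e[i]$ with $i\ge 0$; condition (iii) of Theorem~\ref{HigherParkingTh} is vacuous when $k=1$). Thus $\dim\frP^{(m)}_n$ equals the number of such sequences.

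Next I would identify this count combinatorially. A sequence $0=i_1, i_2, \ldots, i_n$ with $i_s\ge 0$ and $i_{s+1}-i_s\le m$ for all $s$ is, reading the differences, a lattice path with unit steps that may rise by at most $m$ at each stage and must stay nonnegative; equivalently, setting $j_s := (m+1)(s-1) - i_s$ one checks these are in bijection with lattice paths from $(0,0)$ to $((m+1)n, 0)$ using up-steps of one unit and down-steps that may drop by up to $m$ — i.e.\ with a standard Fuss--Catalan object. The cleanest route is probably to invoke the cycle lemma / Raney's formula directly: sequences of nonnegative integers $i_1=0$, $i_{s+1}\le i_s+m$ are counted by the $m$-Raney numbers, and the total over $s=1,\ldots,n$ is $\frac{1}{mn+1}\binom{(m+1)n}{n}$, which is exactly $c^{(m)}_n$. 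Alternatively — and this is the route the paper's own style suggests — I would note that the $k=1$ specialisation of the ``higher parking functions'' count $(mn+1)^{n-1}$ degenerates: by the analogue of Theorem~\ref{ParkingCor2} for $\frB^{(m)}$, the dimension of $\frQ^{(m)}_n(1)$ is the coefficient of $m_{(n)}$ in the Frobenius character of the higher-parking module, i.e.\ the number of $S_n$-orbits on higher parking functions of length $n$, which is classically the $n$-th Fuss--Catalan number.

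**The main obstacle** is not in this corollary at all but upstream: it rests entirely on Theorem~\ref{HigherParkingTh}, whose proof the paper leaves to the reader. To make the present argument self-contained one must verify that the monomials of the stated shape span $\frQ^{(m)}_n(k)$ (a straightforward $m$-adic variant of the rewriting in Lemma~\ref{park-span}, using the defining relation $\sum_{i=0}^m(-1)^i\binom{m}{i}[e_p[r-i],e_q[s+i]]_m=0$ to lower the top index in steps of $m$) and that they are linearly independent, the latter via the vertex-operator model for the degenerate rank-one lattice with bilinear form $(f,f)=-m$ and a semiinfinite-basis/character count parallel to the $m=1$ case treated in Section~\ref{parking}. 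Granting that, the passage to $\frP^{(m)}_n$ and the Fuss--Catalan count is routine; the only genuinely non-mechanical point is choosing the bijection in the previous paragraph cleanly enough that the formula $c^{(m)}_n=\frac{1}{mn+1}\binom{(m+1)n}{n}$ drops out without a page of manipulation.
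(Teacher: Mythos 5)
Your proposal is correct and follows essentially the route the paper intends: the paper leaves the proof of Theorem~\ref{HigherCatalanCor} to the reader, indicating only that one repeats the $m=1$ argument with the bilinear form set to $-m$, and its model proof of Theorem~\ref{CatalanCor} is exactly your second route (the coefficient of $m_{(n)}$ in the Frobenius character, i.e.\ the number of $S_n$-orbits on parking functions). Your direct count of the admissible sequences from Theorem~\ref{HigherParkingTh} at $k=1$, and your observation that everything rests on establishing that basis theorem by the $m$-adic analogue of Lemma~\ref{park-span} plus the vertex-operator character argument, match the paper's intent.
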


\subsection*{Hilbert schemes and rational Cherednik algebras}

In his works on diagonal harmonics~\cite{Ha1,Ha2} Mark Haiman introduced $q,t$ versions of the above numbers whose specialisations at $t=1$ become the $q$-analogues we mentioned. Moreover, in his setting the families of bigraded vector spaces of dimensions $c^{(m)}_n$ and $(mn+1)^{n-1}$ also appear in the most natural way. For example, 
the $c_{n}^{(m)}$-dimensional space from Haiman's work is the space of global sections of the vector bundle  $\xi^{\otimes m}$ on the zero fibre $Z_n$ of the Hilbert scheme ${\rm Hilb}_n(\mathbb{C}^2)$ (here $\xi$ denotes the top exterior power of the tautological bundle on the Hilbert scheme). 

We expect that our vector spaces $\frP_n^{(m)}$ are related to Haiman's spaces. More precisely, we expect that the following conjecture holds.

\begin{conjecture}
There exist some natural filtrations on vector spaces $\frP_{n}^{(m)}$ and mappings of vector spaces 
$$\frP_{n}^{(m_1)}\otimes\frP_{n}^{(m_2)}\to \frP_{n}^{(m_1+m_2)}$$
compatible with filtrations such that on the level of the corresponding graded vector spaces:
\begin{itemize}
\item[(i)] bigraded characters (w.r.t.\,the original grading and the grading obtained from the filtration) coincide with Haiman's bigraded characters;
\item[(ii)] the spaces are identified with the corresponding spaces of global sections and the mappings become taking products of global sections;
\end{itemize}
\end{conjecture}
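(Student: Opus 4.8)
The plan is to build everything on the monomial basis of Theorem~\ref{HigherParkingTh} specialised to a single colour ($k=1$). For that algebra the basis of $\frP_n^{(m)}$ consists of the monomials $ve[i_1]\ldots e[i_n]$ with $i_1=0$ and $0\le i_{s+1}\le i_s+m$, and the assignment $D\mapsto(i_1,\ldots,i_n)$ identifies this basis with the set of \emph{area sequences} of $m$-Dyck paths (the higher analogue of the $a$-data of Corollary~\ref{dyck}). Under this identification the original $\mathbb{Z}$-grading of $\frP_n^{(m)}$ by total degree $i_1+\ldots+i_n$ is exactly the $\mathrm{area}$ statistic, which I expect to correspond to the geometric ($q$-)grading on $H^0(Z_n,\xi^{\otimes m})$. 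The strategy is then threefold: (a) produce the second filtration so that its associated graded realises the complementary ($t$-)statistic; (b) match the resulting bigraded character with Haiman's; and (c) realise the maps $\frP_n^{(m_1)}\otimes\frP_n^{(m_2)}\to\frP_n^{(m_1+m_2)}$ by pointwise addition of area sequences.

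For step (a) the natural candidate is the decreasing filtration $F_{\ge\ell}\frP_n^{(m)}$ spanned by those admissible monomials whose image under the $\mathrm{bounce}$ (equivalently $\mathrm{dinv}$) statistic of the associated $m$-Dyck path is at least $\ell$; one must then show that this subspace is \emph{intrinsically} defined, i.e.\ independent of the chosen basis and stable under the structural maps, so that $\mathrm{gr}\,\frP_n^{(m)}=\bigoplus_\ell F_{\ge\ell}/F_{>\ell}$ carries a genuine second grading. The content of step (b) is the identity
$$
\ch\big(\mathrm{gr}\,\frP_n^{(m)}\big)(q,t)=\sum_{D}q^{\mathrm{area}(D)}t^{\mathrm{bounce}(D)},
$$
the sum being over $m$-Dyck paths, together with the theorem (for $m=1$ due to Haiman and to Garsia--Haglund, and in general part of the higher $q,t$-Fuss--Catalan theory) that this combinatorial series is the bigraded Hilbert series of $H^0(Z_n,\xi^{\otimes m})$. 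On the level of ungraded dimensions this is already guaranteed by Theorem~\ref{HigherCatalanCor}; the genuinely new input is the refinement to two gradings.

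For step (c), if $(i_1,\ldots,i_n)$ and $(j_1,\ldots,j_n)$ are area sequences for $m_1$ and $m_2$, then $(i_1+j_1,\ldots,i_n+j_n)$ satisfies $i_1+j_1=0$ and $(i_{s+1}+j_{s+1})\le(i_s+j_s)+(m_1+m_2)$, hence is an $(m_1+m_2)$-area sequence; this gives a map on bases, and I would define the required linear map $\frP_n^{(m_1)}\otimes\frP_n^{(m_2)}\to\frP_n^{(m_1+m_2)}$ by this rule (the multi-colour version being handled by the same pointwise addition together with the order condition~(iii)). Since $\mathrm{area}$ is additive under pointwise addition, the map is homogeneous for the original grading. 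The delicate point is compatibility with the filtration: one must verify that $\mathrm{bounce}$ is superadditive enough that the map carries $F_{\ge\ell_1}\otimes F_{\ge\ell_2}$ into $F_{\ge\ell_1+\ell_2}$ and that the induced map on associated graded spaces has additive second degree, so that under the identification of step~(b) it can be matched with multiplication in the bigraded ring $\bigoplus_m H^0(Z_n,\xi^{\otimes m})$, where products add both the fibre degree and the internal degree.

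The main obstacle is step (b) together with the geometric identification demanded by part~(ii), and this is precisely why the statement remains conjectural. Even the bigraded character identity is, already for $m=1$, equivalent to the coincidence of the combinatorial $(q,t)$-Catalan number with the bigraded Hilbert series of diagonal harmonics, a deep theorem rather than a formal consequence of the vertex-operator construction. More seriously, the conjecture requires an \emph{actual} isomorphism of $\mathrm{gr}\,\frP_n^{(m)}$ with $H^0(Z_n,\xi^{\otimes m})$ under which the combinatorial product becomes the product of sections, and this cannot be extracted from characters alone. The only plausible route I see is to pass through the Gordon--Stafford equivalence~\cite{GS} between finite-dimensional modules over the rational Cherednik algebra and sheaves on the Hilbert scheme: one would realise $\frP_n^{(m)}$ (or its associated graded) as such a Cherednik-algebra module, transport it to $Z_n$, and identify the resulting sheaf with $\xi^{\otimes m}$, after which both the second grading and the multiplicative structure would have to be matched on the geometric side. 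Producing such a functorial geometric realisation of $\frP_n^{(m)}$ is the essential difficulty, and I do not expect it to be routine.
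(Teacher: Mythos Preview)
This statement is labelled a \emph{Conjecture} in the paper and is not proved there; consequently there is no ``paper's own proof'' against which to compare your proposal. What the paper does supply is a piece of supporting evidence of a rather different flavour from your outline: it observes (via the lemma immediately following the conjecture) that the dual space $\frP_n^*$ is a cyclic module over the positive half $H_+$ of the Heisenberg algebra, mirroring Haiman's result that the duals of his spaces are cyclic over the abelian Lie algebra $\calL$ generated in degrees $1,2,\ldots$. This is structural, module-theoretic evidence for an identification of the two families, not a combinatorial matching of statistics.

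Your proposal is not a proof either, and you say so yourself: steps (a), (b) and the geometric identification in (ii) are all explicitly left open, and you correctly locate the essential difficulty in producing a functorial realisation of $\frP_n^{(m)}$ on the Hilbert-scheme side (or, equivalently, through the Gordon--Stafford correspondence). As a strategy the outline is reasonable, but two points deserve emphasis. First, the filtration you propose in step (a) is defined on the admissible basis by the value of a combinatorial statistic; for this to be a ``natural filtration'' in the sense of the conjecture one must show it is basis-independent, and nothing in the vertex-operator construction suggests why the span of monomials with $\mathrm{bounce}\ge\ell$ should be an intrinsically defined subspace. Second, the multiplication in step (c) is likewise defined on basis elements by pointwise addition of area sequences and then extended bilinearly; this is certainly a linear map, but there is no a priori reason it should coincide with, or even be related to, the product of global sections --- that identification is precisely the content of part (ii) and cannot be read off from the additivity of $\mathrm{area}$ alone. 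In short, your diagnosis of where the genuine obstacles lie is accurate and matches the paper's own assessment that a resolution would likely pass through rational Cherednik algebras; but neither the paper nor your proposal contains a proof.
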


Let us provide some evidence to support our conjecture. In \cite{Ha2}, Haiman proved that the spaces dual to $c_n$-dimensional spaces from his work are cyclic modules over the abelian Lie algebra $\calL$ with generators of degrees $1,2,\ldots,n,\ldots$ (the same algebra acts on the $c_{n}^{(m)}$-dimensional spaces as well). Our spaces $\frP_n$ (and actually all spaces $\frP_{n}^{(m)}$) are modules over the subalgebra $H_+$ of the Heisenberg algebra spanned by $f[i]$ with $i>0$. Moreover, $\frP_n^*$ is a cyclic module over this algebra. This follows from
\begin{lemma}
Denote by~$\frU_n$ the subspace of~$\calF_{n}$ spanned by all vectors of the form $v_0e[i_1]\ldots e[i_n]$ (where $i_k$ are not necessarily nonnegative). Then $\frU_n^*$ can be naturally identified with the space of rational functions of the type $$\dfrac{P(x_1,\ldots,x_n)}{\prod\limits_{i<j}(x_i-x_j)},$$ where $P$ is a symmetric polynomial. Under this identification, the operator $f[k]$ with $k>0$ acts by multiplication by the $k^\text{th}$ power sum $p_k=x_1^k+\ldots+x_n^k$.
\end{lemma}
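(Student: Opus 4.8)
The plan is to package the whole subspace $\frU_n$ into a single generating series built from the vertex operators, and then to read both assertions off its shape. Write $f$ for the generator $f_1$ of the rank-one lattice, so $(f,f)=-1$, and note that $\frU_n$ lies inside the charge-$n$ Fock space $\calF_n=\calF_{nf}$. As a vector space, $\calF_{nf}$ is the polynomial algebra in the creation operators $f[-k]$ ($k\ge1$) applied to the vacuum $v_{nf}$, and the assignment $v_{nf}f[-\lambda]\mapsto p_\lambda$ (power-sum symmetric functions, $f[-\lambda]=\prod_i f[-\lambda_i]$) identifies $\calF_{nf}$ with the ring of symmetric functions $\Lambda$; I will use this identification throughout.

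The heart of the proof — and the step whose bookkeeping is the main obstacle — is the explicit evaluation of the vertex-operator series
$$
v_0\,V_{0,f}(z_1)\,V_{f,f}(z_2)\cdots V_{(n-1)f,f}(z_n).
$$
Applying the exponential formula of Proposition~\ref{exist} together with the reordering computation from the proof of Proposition~\ref{rel} (move each annihilation factor leftward past all creation factors already collected), and using that $v_0$ is annihilated by $f[j]$ for $j\ge0$ while each $D_{h',f}$ commutes with $f[j]$ for $j\ne0$: all annihilation factors disappear, each of the $\binom n2$ exchanges costs a factor $(1-z_a/z_m)^{(f,f)}=(1-z_a/z_m)^{-1}$, and after passing from the $V$'s to the operators $e[i]=U_1[i]$ (which introduces monomial prefactors that cancel against those produced by $(1-z_a/z_m)^{-1}=z_m(z_m-z_a)^{-1}$) one is left with
$$
\sum_{i_1,\dots,i_n\in\mathbb{Z}}\bigl(v_0\,e[i_1]\cdots e[i_n]\bigr)\,z_1^{-i_1}\cdots z_n^{-i_n}
 \;=\; \frac{(-1)^{\binom n2}}{\Delta(z)}\;v_{nf}\exp\!\Bigl(\sum_{k\ge1} f[-k]\,\frac{p_k(z_1,\dots,z_n)}{k}\Bigr),
$$
where $\Delta(z)=\prod_{i<j}(z_i-z_j)$. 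Under $\calF_{nf}\cong\Lambda$ the exponential on the right equals $\sum_\lambda z_\lambda^{-1}p_\lambda(z_1,\dots,z_n)\,p_\lambda$, i.e.\ the Cauchy reproducing kernel of the Hall form in the $n$ auxiliary variables.

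Granting this, the identification of $\frU_n^*$ is immediate. For $\phi\in\frU_n^*$, choose any extension $\widetilde\phi$ to $\calF_{nf}$ and apply it coefficientwise to the series above; one obtains $\sum_{\mathbf i}\phi(v_0 e[i_1]\cdots e[i_n])\,z^{-\mathbf i}=(-1)^{\binom n2}\Delta(z)^{-1}Q_\phi(z_1,\dots,z_n)$, where $Q_\phi=\sum_\lambda z_\lambda^{-1}\,\widetilde\phi(v_{nf}f[-\lambda])\,p_\lambda(z_1,\dots,z_n)$ is a symmetric polynomial in $z_1,\dots,z_n$. Because the monomials $v_0 e[i_1]\cdots e[i_n]$ span $\frU_n$, this left-hand side — hence $Q_\phi$ — depends only on $\phi$, not on the chosen extension; the same remark shows $\phi\mapsto Q_\phi/\Delta$ is injective, and it is surjective onto $\{P/\Delta : P\text{ symmetric}\}$ because the $p_\lambda(z_1,\dots,z_n)$ span the symmetric polynomials in $n$ variables. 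This is the asserted natural identification.

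Finally, the commutation relation $[h''[j],V_{h',h}[i]]=-(h,h'')V_{h',h}[i+j]$ gives $[f[k],e[i]]=e[i+k]$; combined with $v_0 f[k]=0$ for $k>0$, moving $f[k]$ leftward through a monomial yields $v_0 e[i_1]\cdots e[i_n]\cdot f[k]=-\sum_{m=1}^{n}v_0 e[i_1]\cdots e[i_m+k]\cdots e[i_n]\in\frU_n$, so $\frU_n$ is indeed an $H_+$-submodule. Dualising and performing the substitution $i_m\mapsto i_m+k$ in the generating series shows that the operator induced by $f[k]$ on $\frU_n^*$ corresponds to multiplication by $-(z_1^k+\dots+z_n^k)=-p_k$ on $\{P/\Delta\}$; composing the identification of the previous paragraph with the automorphism $p_k\mapsto -p_k$ of the ring of symmetric functions — which is harmless, amounting only to a sign convention in the boson--fermion dictionary — turns this into multiplication by the power sum $p_k$, as claimed. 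Thus the only genuinely delicate point is the constant-and-sign accounting in the central computation (the $(f,f)=-1$ signs, the $z$-power normalisation of $e(z)$, and the right-module versus operator-composition conventions), so that the spurious monomial factors cancel and the series comes out exactly as $\Delta(z)^{-1}$ times the Cauchy kernel; everything else is formal, resting on the span property of the $v_0 e[i_1]\cdots e[i_n]$ and standard properties of the Cauchy kernel.
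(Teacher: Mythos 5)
The paper states this lemma without proof (it appears in the appendix as a supporting fact), so there is no official argument to compare against; your proposal supplies the natural one and its core is correct. The central computation checks out: reordering $v_0V_{0,f}(z_1)\cdots V_{(n-1)f,f}(z_n)$ with the exponential formula of Proposition~\ref{exist} and the exchange lemma from the proof of Proposition~\ref{rel} costs $\prod_{a<m}(1-z_a/z_m)^{(f,f)}=\prod_{a<m}z_m/(z_m-z_a)$, and the monomial prefactors $\prod_m z_m^{m-1}$ coming from $V_{(m-1)f,f}(z_m)=z_m^{m-1}e(z_m)$ cancel exactly, leaving $(-1)^{\binom{n}{2}}\Delta(z)^{-1}$ times the Cauchy kernel. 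The duality argument (independence of the extension $\widetilde\phi$, injectivity from the span property, surjectivity because the $p_\lambda(z_1,\dots,z_n)$ span the symmetric polynomials) is sound; the only caveat is that one should take the graded dual of $\frU_n$ if one wants $P$ to be a genuine polynomial rather than a symmetric formal series, a harmless convention.

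The one step that does not work as written is your repair of the sign in the $H_+$-action. Your computation correctly shows that with $(\phi\cdot f[k])(u)=\phi(u f[k])$ the induced operator is multiplication by $-p_k$. But the proposed fix, precomposing with ``the automorphism $p_k\mapsto -p_k$ of the ring of symmetric functions'', is not available here: in $n$ variables the power sums are not algebraically independent (already for $n=1$ one has $p_2=p_1^2$), so no linear automorphism $\theta$ of $\{P/\Delta\}$ can satisfy $\theta(p_k(z)F)=-p_k(z)\theta(F)$ for all $k$ simultaneously --- for $n=1$ this would force $-z_1^2\theta(F)=\theta(p_2 F)=\theta(p_1^2F)=+z_1^2\theta(F)$, i.e.\ $\theta=0$. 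The correct resolution is purely conventional and simpler: the dual of a module over a Lie algebra carries the contragredient action $(\phi\cdot x)(u)=-\phi(ux)$, and with this standard convention the extra minus sign is absorbed and $f[k]$ acts by $+p_k$ exactly as the lemma states. (For the purpose the lemma serves in the paper, namely cyclicity of $\frU_n^*$ over $H_+$, the sign is immaterial either way.) With that substitution for your last paragraph, the proof is complete.
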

(Therefore the module $\frC_{n}^*$ is cyclic, being a quotient of the cyclic module~$\frU_n^*$.)

Gordon and Stafford \cite{GS} related Haiman's spaces to finite-dimensional representations of rational Cherednik algebras. Recall that the rational Cherednik algebras~$\mathbf{H}_{c}$ of type~$A$ have, for the special values~$c=m+1/n$, finite-dimensional representations of dimension~$(mn+1)^{n-1}$ (and their spherical subalgebras have, for the same~$c$, representations whose dimensions are Fuss--Catalan numbers). One of the main result in \cite{GS} (previously a conjecture of Berest, Etingof, and Ginzburg~\cite{BEG}) is that those finite-dimensional representations can be thought of as filtered versions of Haiman's spaces (up to a twist by the sign representation). We expect that our constructions are related to these ones, and hope to address this in more detail elsewhere.

Finally, it would be interesting to understand whether some versions of our vertex algebras can be defined on the spaces of global sections geometrically, in the spirit of beautiful constructions of Grojnowski~\cite{Gro} and Nakajima~\cite{Na}.

\end{document}